\title{Permutation sorting and a game on graphs}
\date{\today}
\author[a]{C.L. Jansen$^{(1)}$} 
\author[b]{M. Scheepers$^{(2),*}$} 
\author[c]{S.L. Simon$^{(3)}$} 
\author[d]{E. Tatum$^{(4)}$} 
\address[1]{Department of Mathematics, University of Notre Dame, Notre Dame, IN 46556.
{\tt Caroline.L.Jansen.10@nd.edu} }
\address[2]{Department of Mathematics, Boise State University, Boise, ID 83725.
{\tt mscheepe@boisestate.edu} }
\address[3]{Department of Mathematical Sciences, Carnegie Mellon University, Pittsburgh, PA 15213. {\tt slsimon@andrew.cmu.edu}}
\address[4]{Department of Mathematics, Rutgers University, Piscataway, NJ 08854-8019. 
{\tt elizabeth.tatum@rutgers.edu}}
\newtheorem{theorem}{\bf Theorem}
\newtheorem{lemma}[theorem]{\bf Lemma}
\newtheorem{corollary}[theorem]{\bf Corollary}
\newtheorem{problem}{\bf Problem}
\tikzstyle{vertex}=[circle, draw, inner sep=0pt, minimum size=10pt]
\tikzstyle{decision} = [diamond, draw, text width=5em, text badly centered, node distance=3cm, inner sep=0pt]
\tikzstyle{block} = [rectangle, draw, text width=5em, text centered, rounded corners, minimum height=4em]
\newcommand*{\DashedArrow}[1][]{\mathbin{\tikz [baseline=-0.25ex,-latex, dashed,#1] \draw [#1] (0pt,0.5ex) -- (1.3em,0.5ex);}}%
    \definecolor{alertcolor}{HTML}{D20155}
    \definecolor{dkblue}{HTML}{FF7401}
    \definecolor{mdblue}{HTML}{FFBD6E}
    \definecolor{ppl}{HTML}{B202D9}
	\definecolor{ltgrn}{HTML}{C8FDEC}
	\definecolor{ltyl}{HTML}{FCFDDC}
	\definecolor{darkorange}{HTML}{A94F00}
	\definecolor{experiment}{HTML}{3A1E97}
	\definecolor{dkgrn}{HTML}{016444}
\subjclass[2010]{05A05, 05C57, 91A43, 91A46, 05C85, 05C90}
\keywords{Permutation sorting, context directed block interchanges, overlap graph, games}
\begin{document}
\maketitle

\begin{abstract}
We introduce a game on graphs. By a theorem of Zermelo, each instance of the game on a finite graph is determined. While the general decision problem on which player has a winning strategy in a given instance of the game is unsolved, we solve the decision problem for a specific class of finite graphs. This result is then applied to a permutation sorting game to prove the optimality of a proportional bound under which TWO has a winning strategy. 
\end{abstract}

Permutation sorting by block interchanges, also called \emph{swaps}, has been studied before \cite{DC}. A constrained version of the block interchange sorting operation on permutations has been postulated to perform steps in decrypting the ciliate micronuclear genome to establish a new macronuclear genome \cite{PER}. We shall refer to this constrained block interchange operation, defined later, by the name \textbf{cds} (for \underline{c}ontext \underline{d}irected \underline{s}wap). Though any permutation is sortable by arbitrary block interchanges, not all finite permutations are sortable by \textbf{cds}. For any permutation repeated applications of \textbf{cds} produce a permutation to which \textbf{cds} no longer applies, called a \textbf{cds} fixed point. For a positive integer $n$ there are $n$ \textbf{cds} fixed points in $\textsf{S}_n$, the set of permutations of $\{1,\;2,\;\ldots,\; n\}$. From this point on we consider only permutations in $\textsf{S}_n$ for some $n$.

Two fundamental graph structures associated with a permutation encode the permutation's properties in connection with the \textbf{cds} sorting operation. One is the \emph{cycle graph} introduced in \cite{BP}, and the other is the \emph{overlap graph} that was introduced in \cite{HPR} - see also \cite{TBS}. 

 The cycle graph keeps track of the status of a permutation in terms of being sorted by block interchanges. Christie \cite{DC} used the cycle graph to compute the minimum number of \emph{arbitrary} swaps required to sort a  permutation. 
The cycle graph is also instrumental in the (linear time) characterization of \textbf{cds}-sortability of a permutation, derived in Theorem 2.5 of \cite{AHMMSW}. This characterization shows that for a \textbf{cds} sortable permutation $\pi$ only the identity permutation is a fixed point for applications of \textbf{cds} to $\pi$. 

A new phenomenon emerges when $\pi$ is not \textbf{cds} sortable: 
Several fixed points may result from applications of \textbf{cds}  to $\pi$. In \cite{AHMMSW} the \emph{strategic pile}\footnote{The strategic pile will be defined in Section 2.} of $\pi$, a structure encoded by the cycle graph of $\pi$, was discovered. The strategic pile determines the various fixed points that are achievable as results of \textbf{cds} operations.

For \textbf{cds}-non-sortable permutations variability in achievable fixed points suggests combinatorial games. The following game, played by players ONE and TWO, was introduced in \cite{AHMMSW}: A \textbf{cds} non-sortable permutation $\pi\in\textsf{S}_n$ and a subset \textsf{A} of its strategic pile \textsf{P} are given. The game $\textsf{CDS}(\pi,\textsf{A})$ 
proceeds as follows:
$\pi$ is the initial permutation. Starting with ONE  the players alternate applying  a \textbf{cds} operation to the current permutation. The game ends when a \textbf{cds} fixed point is reached. If the code for this fixed point, necessarily a member of the strategic pile \textsf{P}, is a member of \textsf{A}, then ONE wins. Else, TWO wins. Note that the length of the game is bounded by $n$. 
Zermelo's Theorem \cite{Z} implies that for each $\pi$ and \textsf{A} some player has a winning strategy in the game \textsf{CDS}($\pi$,\textsf{A}). No efficient criterion for deciding from $\pi$ and \textsf{A} which player has a winning strategy is known. In Theorem 4.4 of \cite{AHMMSW} it was shown that if $\vert\textsf{A}\vert \le \frac{1}{4}\vert \textsf{P}\vert - 2$, then TWO has a winning strategy  and if $\vert\textsf{A}\vert \ge\frac{3}{4}\vert \textsf{P}\vert$, then ONE has a winning strategy.  We use an end-game analysis in Section 5 below to give more precise bounds in terms of the value of $\vert P\vert \mod 4$.

The exhibited winning strategies simply exploited the winning player's numerical advantage only, suggesting that with a deeper analysis of available moves these two numerical bounds could be vastly improved. The overlap graph keeps track of the availability of 
\textbf{cds} moves. Understanding the effect on the corresponding overlap graph when \textbf{cds} is applied to a permutation is instrumental in such an analysis. We uncovered the graph transformation, denoted \textbf{gcds}, that captures the effect of applying a \textbf{cds} sorting operation to the corresponding permutation. To our knowledge this graph transformation is described here for the first time. 

The transformation \textbf{gcds} is applicable to any graph with a nonempty set of edges - not only to the overlap graph of a permutation. We introduce a two-player game
based on \textbf{gcds}. For finite graphs Zermelo's theorem implies that one of the players has a winning strategy. We have not solved the decision problem\footnote{In stating decision problems we shall follow established format popularized by \cite{GJ}.} of which player has a winning strategy. However, in Theorem \ref{NPGraphs} we describe a class of graphs for which it is known which player has a winning strategy. Applying this new information to overlap graphs of permutations, we prove in Theorem \ref{boundtighttheorem} that for each $n>4$ that is a multiple of $4$ there is a permutation $\pi$ of $n$ symbols such that the strategic pile $\textsf{P}$ has $n-1$ elements and yet for some set $\textsf{A}\subseteq \textsf{P}$ with $\vert \textsf{A}\vert = \frac{\vert \textsf{P}\vert -3}{4} +1 $, player ONE has a winning strategy in the game $\textsf{CDS}(\pi,\textsf{A})$. This result shows that at least in one case the bounds we obtained from an end-game analysis cannot be improved.

Our paper is organized as follows:
In Section 1 we  establish basic terminology and notation. In Section 2 we record important features of the \emph{cycle graph} of \textbf{cds} non-sortable permutations.
In Section 3 we introduce the overlap graph of a permutation. In Section 4 we introduce the new graph transformation \textbf{gcds}. In Section 5 we introduce a \textbf{gcds}-based game on finite graphs. In Section 6 we identify a class of graphs for which we know which player has a winning strategy. The findings from Section 6 are used in Section 7 to prove Theorem \ref{boundtighttheorem}. 

\section{Notation and Terminology}

For $n$ a positive integer the permutations in the symmetric group $\textsf{S}_n$ are the one-to-one functions from the set $\{1,\; 2,\; \cdots,\; n\}$ to itself.  For permutation $\alpha$ the notation $\alpha = \lbrack a_1,\; \cdots,\; a_n\rbrack$ denotes that $\alpha$ maps $a_i$ to $i$. To define the \emph{context directed swap} sorting operation, \textbf{cds}, we define the notion of a pointer: Consider the entry $a_i$ of  
$\alpha$ and suppose $a_i$ is the positive integer $k$. Then the \emph{head pointer}, or simply \emph{head} of $a_i$ is the ordered pair $(k,\;  k +1)$, while the \emph{tail pointer}, or simply \emph{tail}, of $a_i$ is the ordered pair $( k-1, k)$.
We use the notation $\alpha = [a_1, a_2, \ldots ,_{(k-1\;,k)}{a_i}_{(k,\;k+1)},\ldots ,{a_j},\ldots, a_n]$ to emphasize pointer locations.

For pointers $p = (x,\; x+1)$ and $q = (y,\; y+1)$ of $\alpha$ we say that $p$ and $q$ \emph{interlock} when $p$ and $q$ appear in the order $p \ldots q \ldots p \ldots q$, or  in the order $q \ldots p \ldots q \ldots p$. We leave $\textbf{cds}_{p,q}(\alpha)$ undefined when $p$ and $q$ do not interlock. 
If $p$ and $q$ interlock we define the permutation $\beta= {\textbf{cds}}_{p,\; q}(\alpha)$ as follows:
{\flushleft{\underline{Case 1:}}} If
$\alpha = \lbrack \ldots,\; a,\; \underbrace{{ \color{black!100} {{\bf _px+1},\;{\bf  b},\;\ldots,{\bf c},\;\; {\bf y_q}}}}_{I},\;d,\;\ldots,\; e,\; x_p,\; \underbrace{{\color{black!100}{\bf f},\; \ldots,\; {\bf g}}}_{II},\;_qy+1,\;h,\; \ldots \rbrack$, define
\[
  \beta = {\textbf{cds}}_{p,\; q}(\alpha) = \lbrack \ldots,\; a,\;\underbrace{{\color{black!100}{\bf f},\; \ldots,\; {\bf g}}}_{II},\;d,\;\ldots,\; e,\; x_p,\; \underbrace{{\color{black!100}_{\bf p}{\bf x+1},\; {\bf b},\;\ldots,{\bf c},\;\; {\bf y_q}}}_{I} ,\;_qy+1,\;h,\;\ldots \rbrack
\]
{\flushleft{\underline{Case 2:}}} If
$\alpha = \lbrack \ldots,\; a,\;x_p,\; \underbrace{{\color{black!100} {\bf b},\;\ldots,{\bf c},\;\; {\bf y_q}}}_{I},\;d,\;\ldots,\;e,\;  \underbrace{{\color{black!100}{\bf  _px+1},\;{\bf  f},\; \ldots,\; {\bf g}}}_{II},\;_qy+1,\;h,\;\ldots \rbrack$, define
\[
  \beta = {\textbf{cds}}_{p,\; q}(\alpha) = \lbrack \ldots,\; a,\; x_p,\; \underbrace{{\color{black!100}{\bf _px+1},\; {\bf f},\; \ldots,\; {\bf g}}}_{II},\;d,\;\ldots,\;e,\;  \underbrace{{\color{black!100} {\bf b},\;\ldots,{\bf c},\;\; {\bf y_q}}}_{I} ,\;_qy+1,\;h,\;\ldots \rbrack
\]
{\flushleft{\underline{Case 3:}}} If
$\alpha = \lbrack \cdots,\; a,\;x_p,\; \underbrace{{\color{black!100}{\bf b}\;\ldots,{\bf c}}}_{I},\; _qy+1,\;d,\;\ldots,\; e,\; \underbrace{{\color{black!100} {\bf _px+1},\; {\bf f},\; \ldots,\; {\bf g},\;{\bf y_q}}}_{II},\;h,\;\ldots \rbrack$, define
\[
  \beta = {\textbf{cds}}_{p,\; q}(\alpha) = \lbrack \ldots,\; a,\; x_p,\; \underbrace{{\color{black!100}{\bf _px+1},\; {\bf f},\; \ldots,\; {\bf g},\; {\bf y_q}}}_{II},\;_qy+1,\;d,\;\ldots,\; e,\; \underbrace{{\color{black!100} {\bf b},\;\ldots,\;{\bf c}}}_{I},\;h,\;\ldots \rbrack
\]
{\flushleft{\underline{Case 4:}}} If
$\alpha = \lbrack \cdots,\; a,\; \underbrace{{\color{black!100}{\bf _px+1},\; {\bf b},\;\ldots,\; {\bf c}}}_{I},\; _qy+1,\;d,\;\ldots,\;e,\;x_p,\; \underbrace{{\color{black!100} {\bf f},\; \ldots,\; {\bf g},\;{\bf y_q}}}_{II},\;h,\;\ldots \rbrack$, define
\[
  \beta = {\textbf{cds}}_{p,\; q}(\alpha) = \lbrack \ldots,\; a,\; \underbrace{{\color{black!100}  {\bf f},\; \ldots,\; {\bf g},\; {\bf y_q}}}_{II},\;_qy+1,\;d,\;\ldots,\;e,\;x_p,\; \underbrace{{\color{black!100} {\bf _px+1},\; {\bf b},\;\ldots,\; {\bf c}}}_{I},\;h,\;\ldots \rbrack
\]
Pointers $p$ and $q$ no longer interlock in $\beta$. The symmetric group $\textsf{S}_n$ has $n$ elements to which \textbf{cds} does not apply, namely the identity permutation and for each $k<n$, the permutation $\lbrack k+1,\ldots,\; n-1,\; n,\; 1,\;2,\; \ldots,\; k-1,\; k\rbrack$. These are the \textbf{cds} fixed points in $\textsf{S}_n$.

As an example we perform ${\textbf{cds}}_{(3,4),(6,7)}$ on $\beta = [3,6,5,2,4,8,1,7]$: Label
$\beta$ with the relevant pointers:
$\beta = [3_{(3,4)},6_{(6,7)},5,2,{}_{(3,4)}4,8,1,_{(6,7)}7]$.
 Performing the swap yields $\delta = [3,{4,8,1},5,2,{6},7]$. 
Both $3$ and $4$ and $6$ and $7$ are now adjacent in $\delta$ and pointers $(3,4)$ and $(6,7)$ no longer interlock.

\section{Cycle graphs and the strategic pile}

\paragraph{}  The \textit{cycle graph} of a permutation $\alpha \in S_n$ is constructed as follows:  
The vertices of the graph are the numbers 0 through $n+1$.  Directed dotted edges are drawn from $i$ to $i+1$ for $i$ from 0 to $n$.
Directed black edges are drawn from each entry in $\alpha$ to the entry immediately preceding it.  Directed black edges are also
drawn from $n+1$ to the last permutation entry, and from the first permutation entry to $0$.
The cycle graph of the example $\beta = [3,6,5,2,4,8,1,7]$ is given in Figure \ref{fig:CycleGraphSample}.
\begin{figure}[h]
\centering
\begin{tikzpicture}[scale = .2]
\coordinate (v0) at (3,-10);
\coordinate (v1) at (-3,-10);
\coordinate (v2) at (-8, -6);
\coordinate (v3) at (-10, 0);
\coordinate (v4) at (-8, 6);
\coordinate (v5) at (-3, 10);
\coordinate (v6) at (3, 10);
\coordinate (v7) at (8,6);
\coordinate (v8) at (10,0);
\coordinate (v9) at (8,-6);

\node at (v0) [below] {0};
\node at (v1) [below] {1};
\node at (v2) [left] {2};
\node at (v3) [left] {3};
\node at (v4) [left] {4};
\node at (v5) [above] {5};
\node at (v6) [above] {6};
\node at (v7) [right] {7};
\node at (v8) [right] {8};
\node at (v9) [right] {9};

\draw [very thick, dotted, color = black, ->] (v0) -- (v1);
\draw [very thick, dotted, color = black, <-] (v2) -- (v1);
\draw [very thick, dotted, color = black, ->] (v2) -- (v3);
\draw [very thick, dotted, color = black, <-] (v4) -- (v3);
\draw [very thick, dotted, color = black, ->] (v4) -- (v5);
\draw [very thick, dotted, color = black, <-] (v6) -- (v5);
\draw [very thick, dotted, color = black, ->] (v6) -- (v7);
\draw [very thick, dotted, color = black, <-] (v8) -- (v7);
\draw [very thick, dotted, color = black, ->] (v8) -- (v9);

\draw [thick, color = black, <-] (v0) -- (v3);
\draw [thick, color = black, ->] (v6) -- (v3);
\draw [thick, color = black, <-] (v6) to [out=-165,in=-15, looseness=1] (v5);
\draw [thick, color = black, <-] (v5) -- (v2);
\draw [thick, color = black, ->] (v4) -- (v2);
\draw [thick, color = black, <-] (v4) -- (v8);
\draw [thick, color = black, <-] (v8) -- (v1);
\draw [thick, color = black, <-] (v1) -- (v7);
\draw [thick, color = black, <-] (v7) -- (v9);

\end{tikzpicture}
\caption{The cycle graph of $\beta = [3,6,5,2,4,8,1,7]$}\label{fig:CycleGraphSample}
\end{figure}
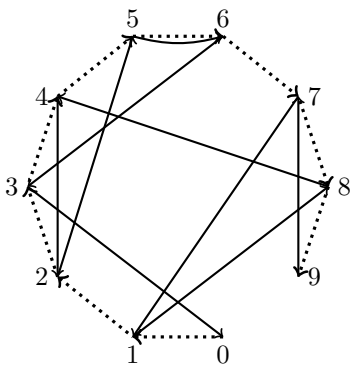 

A directed path in the cycle graph is said to be \emph{alternating} if the edges in consecutive segments of the path alternate between being dotted or black. The set of edges of the cycle graph has a unique decomposition into edge disjoint alternating cycles. Consider the 
edges $0{\color{black}\DashedArrow[ dotted, thick    ] } 1$ and $n{\color{black} \DashedArrow[ dotted, thick    ] } n+1$:  The \emph{strategic pile} of the permutation $\alpha$ is defined to be the set of pointers
\[
   \{(i,\; i+1):\; i {\color{black} \DashedArrow[ dotted, thick    ] } i+1 \mbox{ is an edge in the directed alternating path from } n{\color{black} \DashedArrow[ dotted, thick    ] } n+1 \mbox{ to } 0{\color{black}\DashedArrow[ dotted, thick    ] } 1\}.
\]
When these two edges belong to different cycles, then the strategic pile of $\alpha$  is empty. The notion of the strategic pile of a permutation was introduced and studied in \cite{AHMMSW}.

The cycle graph of  the example $\beta$ has only one alternating cycle, shown in Figure \ref{fig:AlternatingCycles}.
\begin{figure}[h]
\begin{tikzpicture}[scale = .25]
\coordinate (r0) at (2,-5);
\coordinate (b1) at (-2,-5);
\coordinate (r8) at (-5,-4.5);
\coordinate (b9) at (-8,-3);
\coordinate (r7) at (-9, -1.5);
\coordinate (b8) at (-9.5,0);
\coordinate (r4) at (-9,1.5);
\coordinate (b5) at (-8,3);
\coordinate (r6) at (-5,4.5);
\coordinate (b7) at (-2,5);
\coordinate (r1) at (2,5);
\coordinate (b2) at (5,4.5);
\coordinate (r5) at (8,3);
\coordinate (b6) at (9,1.5);
\coordinate (r3) at (9.5,0);
\coordinate (b4) at (9,-1.5);
\coordinate (r2) at (8,-3);
\coordinate (b3) at (5,-4.5);

\node at (r0) [below] {0};
\node at (r1) [above] {1};
\node at (r2) [right] {2};
\node at (r3) [right] {3};
\node at (r4) [left] {4};
\node at (r5) [right] {5};
\node at (r6) [above] {6};
\node at (r7) [left] {7};
\node at (r8) [below] {8};
\node at (b1) [below] {1};
\node at (b2) [above] {2}; 
\node at (b3) [below] {3};
\node at (b4) [right] {4};
\node at (b5) [left] {5};
\node at (b6) [right] {6};
\node at (b7) [above] {7};
\node at (b8) [left] {8};
\node at (b9) [left] {9};

\draw [very thick, dotted, color = black, ->] (r0) -- (b1);
\draw [very thick, dotted, color = black, <-] (b2) -- (r1);
\draw [very thick, dotted, color = black, ->] (r2) -- (b3);
\draw [very thick, dotted, color = black, <-] (b4) -- (r3);
\draw [very thick, dotted, color = black, ->] (r4) -- (b5);
\draw [very thick, dotted, color = black, <-] (b6) -- (r5);
\draw [very thick, dotted, color = black, ->] (r6) -- (b7);
\draw [very thick, dotted, color = black, <-] (b8) -- (r7);
\draw [very thick, dotted, color = black, ->] (r8) -- (b9);

\draw [thick, color =  black, <-] (r0) -- (b3);
\draw [thick, color =  black, ->] (b6) -- (r3);
\draw [thick, color =  black, <-] (r6) -- (b5);
\draw [thick, color =  black, <-] (r5) -- (b2);
\draw [thick, color =  black, ->] (b4) -- (r2);
\draw [thick, color =  black, <-] (r4) -- (b8);
\draw [thick, color =  black, <-] (r8) -- (b1);
\draw [thick, color =  black, <-] (r1) -- (b7);
\draw [thick, color =  black, <-] (r7) -- (b9);

\end{tikzpicture}
\caption{The alternating cycles of $\beta$}\label{fig:AlternatingCycles}
\end{figure}
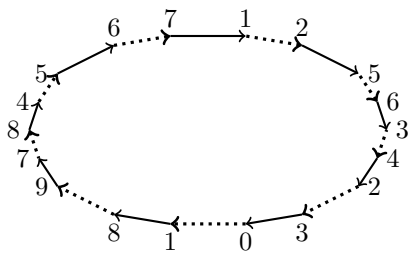

The strategic pile of $\beta$ is nonempty: As the reader may verify, the strategic pile of $\beta$ consists of all the pointers in $\beta$.
\begin{lemma}\label{strpilelemma} Let $\alpha$ be a permutation in $\textsf{S}_n$.
\begin{enumerate}
\item{(\textbf{cds} Sortability Theorem, \cite{AHMMSW}, Theorem 2.5) $\alpha$ is \textbf{cds}-sortable if, and only if, the strategic pile of $\alpha$ is empty.}
\item{(Strategic Pile Removal Theorem, \cite{AHMMSW}, Theorem 2.15 ) Suppose that $\alpha$ has more than one element in the strategic pile. For each pointer $p$ in the strategic pile there exists a pointer $q$ such that \textbf{cds}  applied to the pointer pair $p$, $q$ results in removal of $p$ from the strategic pile.}
\item{(\textbf{cds} Fixed Point Theorem, \cite{AHMMSW}, Theorem 2.19) For an $i\in\{2,\cdots,n\}$,  $[i, i+1, \ldots, n, 1, 2, \ldots, i-1]$ is an achievable \textbf{cds} fixed point of $\alpha$ if, and only if, $(i-1,i)$ is a member of the strategic pile of $\alpha$.}
\item{(\textbf{cds} Bounded Removal Theorem, \cite{AHMMSW}, Theorem 2.21)  An application of \textbf{cds} removes at most two elements from the strategic pile.}
\end{enumerate}
\end{lemma}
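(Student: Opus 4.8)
All four statements are the cited theorems of \cite{AHMMSW}, and each is really a statement about how \textbf{cds} acts on the cycle graph, so the natural line of attack --- the one used there --- is to read a single \textbf{cds} move directly off the decomposition of the cycle graph into edge-disjoint alternating cycles. Write $u$ for the dotted edge from $0$ to $1$ and $v$ for the dotted edge from $n$ to $n+1$. The dictionary one sets up first is: the pointer $(j,j+1)$ is carried by the dotted edge from $j$ to $j+1$; two pointers interlock precisely when their occurrences cross as chords, and such a pair lies on a common alternating cycle; ${\textbf{cds}}_{p,q}$ is a cut-and-reglue on that cycle whose net effect is to detach $p$ and $q$ as two new $2$-cycles, raising the number of alternating cycles by exactly $2$; and $u$ and $v$ come from pointers occurring only once in $\alpha$, so they never interlock --- in particular neither $u$ nor $v$ can ever serve as $p$ or as $q$.

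Granting this, part (1) follows from the observation that the property ``$u$ and $v$ lie on the same alternating cycle'' is \textbf{cds}-invariant: a move either leaves the cycle(s) through $u$ and $v$ untouched, or --- since neither $u$ nor $v$ is one of $p,q$ --- merely peels two new $2$-cycles off a single cycle that both $u$ and $v$ happen to lie on, leaving them together on the one remaining piece. Since the strategic pile is by definition the set of pointers whose dotted edges lie on the directed alternating arc from $v$ to $u$, emptiness of the strategic pile says exactly that $u$ and $v$ lie on different cycles. So, if the pile is empty, keep applying \textbf{cds}: a move is available whenever $\alpha$ is not a fixed point, each move strictly raises the cycle count, that count is at most $n+1$ with equality only for the identity, and by the invariance we never land on a non-identity fixed point (those have $u$ and $v$ on one cycle); hence we reach the identity and $\alpha$ is \textbf{cds}-sortable. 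Conversely, a nonempty pile pins $u$ and $v$ to a common cycle forever, whereas the identity has them in distinct $2$-cycles, so the identity is unreachable and $\alpha$ is not \textbf{cds}-sortable.

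Parts (2)--(4) are finer accounting on the same surgery. For (4), the only dotted edges whose cycle membership is changed by ${\textbf{cds}}_{p,q}$ are $p$ and $q$, so the strategic pile can shed at most those two pointers (and sheds none if the operated cycle is not the one carrying the arc from $v$ to $u$). For (2), given a strategic-pile pointer $p$, one produces an interlocking partner $q$ --- using that the pile, having more than one element, is large enough to supply a pointer whose chord crosses $p$'s --- and ${\textbf{cds}}_{p,q}$ detaches $p$ as a $2$-cycle, removing it from the pile. For (3): $(\Leftarrow)$ iterate (2) to strip every strategic-pile pointer except $(i-1,i)$; by (1) the result is a \textbf{cds} fixed point, and the only fixed point of $\textsf{S}_n$ whose strategic pile still contains $(i-1,i)$ is $[i,i+1,\ldots,n,1,\ldots,i-1]$, since each non-identity fixed point $[j,j+1,\ldots,n,1,\ldots,j-1]$ has a single six-edge alternating cycle, running through the dotted edges of the pointers $(0,1)$, $(n,n+1)$ and $(j-1,j)$; $(\Rightarrow)$ a \textbf{cds} move never \emph{adds} a pointer to the strategic pile, so if $(i-1,i)$ is not in the strategic pile of $\alpha$ then it is in the strategic pile of no reachable fixed point, whence that fixed point is not $[i,i+1,\ldots,n,1,\ldots,i-1]$.

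The one genuinely laborious ingredient is the dictionary of the first paragraph --- chiefly that interlocking pointers share an alternating cycle and that ${\textbf{cds}}_{p,q}$ splits that cycle so as to detach exactly $p$ and $q$ (hence $+2$ on the cycle count). I expect no slick argument here: one works through Cases 1--4 of the definition of \textbf{cds}, and for each follows the alternating subpaths threading the marked blocks $I$ and $II$ and the flanking entries $a,d,e,h$, and sees how they recombine after the block interchange. Once that is settled, the invariance argument for (1) and all of (2)--(4) are elementary bookkeeping on the alternating-cycle decomposition.
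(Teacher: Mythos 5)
The paper does not prove this lemma at all: each of the four items is imported from \cite{AHMMSW} with its theorem number attached, and no argument for any of them appears in the present text. So there is no in-paper proof to compare yours against; the only question is whether your sketch would stand on its own, and in its current form it would not.

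Your overall strategy, reading a single \textbf{cds} move off the alternating-cycle decomposition of the cycle graph, is surely the right one and is presumably how \cite{AHMMSW} proceeds. But the entire load-bearing content of the proposal sits in the ``dictionary'' of your first paragraph, which you acknowledge is unverified, and several of its entries are not routine bookkeeping but are essentially the cited theorems themselves. Concretely: (i) the claim that interlocking pointers lie on a common alternating cycle and that $\textbf{cds}_{p,q}$ splits that cycle into exactly two new $2$-cycles plus one residual cycle (so the cycle count rises by exactly $2$ and no dotted edge other than $p$ and $q$ changes cycle membership) is the whole substance of items (1) and (4); it requires tracking the four black edges rerouted by the block interchange through Cases 1--4 and is not obviously true without that computation. (ii) Your forward direction of item (3) invokes the fact that a \textbf{cds} move never \emph{adds} a pointer to the strategic pile. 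Even granting that the residual cycle carries the same dotted edges as before, the strategic pile is the set of dotted edges on the directed alternating path \emph{from} $n\to n+1$ \emph{to} $0\to 1$, and regluing the cycle could in principle move a dotted edge from the complementary arc onto that path; monotonicity of the strategic pile is a separate nontrivial fact, in effect another theorem of \cite{AHMMSW}, asserted here without argument. (iii) For item (2) you must exhibit, for a given strategic-pile pointer $p$, an interlocking partner $q$ whose surgery actually detaches $p$; ``the pile is large enough to supply a crossing chord'' is an assertion, not an argument, since a priori the two occurrences of $p$ might fail to separate the occurrences of every other pointer. None of this indicates a wrong approach, but each point is a genuine gap: what you have is a roadmap to the proofs in \cite{AHMMSW}, not a proof.
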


\section{Overlap graphs} 

Consider a permutation $\alpha\in\textsf{S}_n$. For a pointer $p = (i,\; i+1)$ in $\alpha$, draw an arc from $p$ to $p$ as in Figure \ref{fig:arcs}:
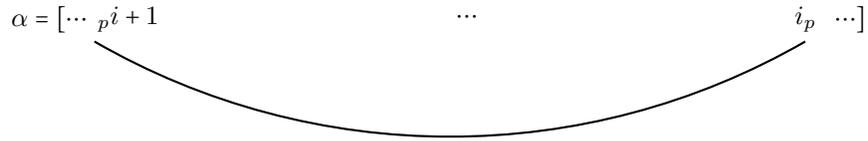
\begin{figure}[ht]
\begin{tikzpicture}[scale = 1.5]
\coordinate (v1) at (-3,3);
\coordinate (v2) at (3,3);
\coordinate (v3) at (0,3);
\coordinate (v4) at (-3.7,3);
\coordinate (v5) at (3.4,3);

    \node[circle, inner sep=8pt] at (v1){$_pi+1$};
    \node[circle, inner sep=8pt] at (v2){$i_p$};
    \node[circle, inner sep=8pt] at (v3){$\cdots$};
    \node[circle, inner sep=8pt] at (v4){$\alpha = \lbrack \cdots$};
    \node[circle, inner sep=8pt] at (v5){$\cdots\rbrack$};

    \draw [thick] (3.0,2.8) arc (-60:-120:6.3cm); 
\end{tikzpicture}
\caption{An arc from pointer $p$ to pointer $p$ in $\alpha$}\label{fig:arcs}
\end{figure}

Do this for each pointer. The set of vertices $V$, of the \emph{overlap graph} of $\alpha$, $\mathcal{O}(\alpha)$, is the set of pointers of $\alpha$. For two vertices $p$ and $q$, the overlap graph has an edge between $p$ and $q$ if the arc associated with $p$ and the arc associated with $q$ have nonempty intersection.

As an illustration consider the permutation $\beta = \lbrack 3,\; 1,\; 4,\; 2,\; 5\rbrack$. Drawing the appropriate arcs associated with the four pointers of $\beta$ produces Figure \ref{fig:OverlapGraphEx}.

\begin{figure}[ht]
\begin{tikzpicture}[scale = 3]

\coordinate (v0) at (-3,0);
\coordinate (v1) at (-2,0);
\coordinate (v2) at (-1,0);
\coordinate (v3) at (0,0);
\coordinate (v4) at (1,0);
\coordinate (v5) at (2,0);
\coordinate (v6) at (3,0);
\coordinate (v7) at (4,0);

\coordinate (t0) at (-3.1,-0.05);
\coordinate (h0) at (-2.9,-0.05);
\coordinate (h1) at (-1.9,-0.05);
\coordinate (t2) at (-1.1,-0.05);
\coordinate (h2) at (-.9,-0.05);
\coordinate (t3) at (-.1,-0.05);
\coordinate (h3) at (.1,-0.05);
\coordinate (t4) at (.9,-0.05);

\node at (v0) [below] {3};
\node at (v1) [below] {1};
\node at (v2) [below] {4};
\node at (v3) [below] {2};
\node at (v4) [below] {5};

\node at (t0) [below] {t};
\node at (h0) [below] {h};
\node at (h1) [below] {h};
\node at (t2) [below] {t};
\node at (h2) [below] {h};
\node at (t3) [below] {t};
\node at (h3) [below] {h};
\node at (t4) [below] {t};

[3,\;1,\;4,\;2,\;5]
\draw [thick, densely dotted, color=black] (h3)+(-0.0,-.15) arc (-60:-120:3.2cm); 
\draw [thick,  loosely dotted, color=black] (t2)+(0.0,-.15) arc (-60:-120:1.75cm); 
\draw [thick,  dotted, color=black] (t3)+(0,-.15) arc (-60:-120:1.7cm); 
\draw [thick,  dashed, color=black] (t4)+(0,-.15) arc (-60:-120:1.75cm); 

\end{tikzpicture}
\caption{Arcs between pointers of $\beta$. ``t" and ``h" denote the tail- and head- pointers. }\label{fig:OverlapGraphEx}
\end{figure}
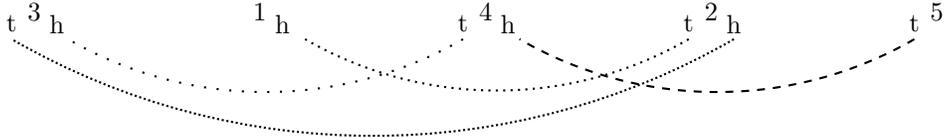
The corresponding overlap graph, including edges, is given in Figure \ref{fig:MoveGrapEx}

\begin{figure}[h]
\centering
\begin{tikzpicture}[scale = 1.0]
\coordinate (v1) at (3,0); 
\coordinate (v2) at (2,0); 
\coordinate (v3) at (1,0); 
\coordinate (v4) at (0,0); 

    \node[draw,circle, inner sep=2pt] at (v1){};
    \node[draw,circle, inner sep=2pt] at (v2){};
    \node[draw,circle, inner sep=2pt] at (v3){};
    \node[draw,circle, inner sep=2pt] at (v4){};


    \node at (v1) [below right]{(3,4)};
    \node at (v2) [below]{(2,3)};
    \node at (v3) [below]{(4,5)};
    \node at (v4) [below left]{(1,2)};

\draw [thick] (v4) -- (v1); 
\draw [thick] (v3) -- (v2);  
\draw [thick] (v4) -- (v3); 

\end{tikzpicture}
\caption{The overlap graph of $\beta = [3,1,4,2,5]$.}\label{fig:MoveGrapEx}
\end{figure}
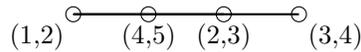

An edge between two vertices of the overlap graph denotes that the pair of pointers connected by the edge interlock. We also refer to the overlap graph of a permutation as its \textbf{cds}-\emph{move graph}.

\section{The graph transformation $\textbf{gcds}$}

We now define the graph transformation \textbf{gcds}. The reader could consult the examples in Figures \ref{fig:gcdsmovegraph} and \ref{fig:gcdsmovegraphex2} as illustrations of the graph transformation \textbf{gcds}. 
Let a graph $\mathcal{G} = (V,\; E)$ be given: $V$ is its set of vertices and $E$ is its set of edges.  Let $\{x,\; y\}\in E$ be given. Then the graph $\mathcal{G}^{\prime} = (V,E^{\prime}) = \textbf{gcds}(\mathcal{G},\{x,\; y\})$ is defined as follows: \\

\fbox{
\parbox{\textwidth}{

{\flushleft{(1)}} In $\textbf{gcds}(\mathcal{G},\{x,\; y\})$ both $x$ and $y$ are isolated vertices.\\ 
{\flushleft{(2)}}  
Create a two-columned \textit{master list} $\textsf{M}(x,y)$: 
\begin{quote}{\tt The first column contains all vertices, except y, that in $\mathcal{G}$ are neighbors\stepcounter{footnote}\footnotemark[\thefootnote] of $x$, and the second column contains all vertices, except x, that in $\mathcal{G}$ are neighbors of $y$.} 
\end{quote} 

{\flushleft{}} 
For any pair of vertices $p\neq q$ in $V\setminus\{x,\; y\}$,
determine whether $\{p,\; q\}\in E^{\prime}$ by using the following criterion:\\ \begin{itemize}
 \item[Case a:]{ If either $p$ or $q$ is not in the master list $\textsf{M}(x,\;y)$, then $\{p,\; q\}\in E^{\prime}$ if, and only if, $\{p,\; q\}\in E$.}
\item[Case b:]{ If both $p$ and $q$ are in the master list, check if both appear in the same column of the master list. 
\begin{itemize}
  \item[Subcase b.1] If not, then $\{p,\;q\}\in E^{\prime}$ if, and only if, $\{p,\; q\}\not\in E$.   \item[Subcase b.2] If they are in the same column of the master list and the number of times $p$ and $q$ occur in the master list is even, then $\{p,\; q\}\in E^{\prime}$ if, and only if, $\{p,\; q\}\in E$.  
  \item[Subcase b.3] If they are in the same column of the master list and the number of times $p$ and $q$ occur in the master list is odd, then $\{p,\; q\}\in E^{\prime}$ if, and only if, $\{p,\; q\}\not\in E$.
\end{itemize}
}
\end{itemize}
}}\footnotetext[\thefootnote]{Two vertices a and b of a graph are neighbors if $\{a,\; b\}$ is an edge of the graph.}
\vspace{0.1in}

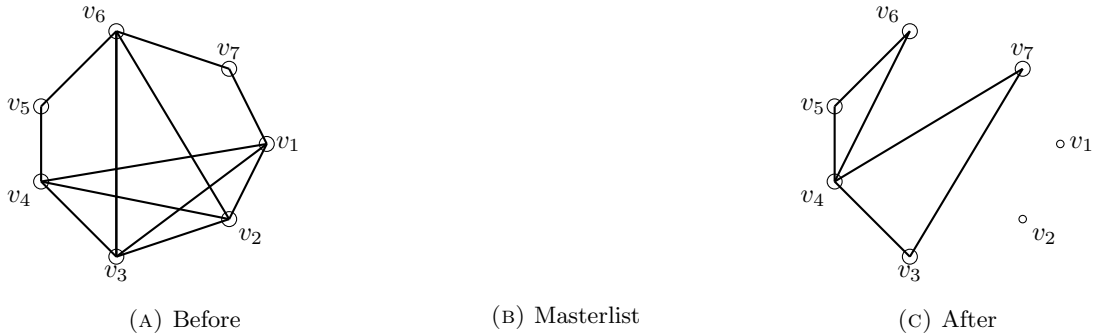
\begin{figure}[ht]
\centering
\begin{subfigure}{0.3\textwidth}
\begin{tikzpicture}[scale = .5]

\coordinate (v1) at (3,0);
\coordinate (v2) at (2,-2);
\coordinate (v3) at (-1,-3);
\coordinate (v4) at (-3,-1);
\coordinate (v5) at (-3,1);
\coordinate (v6) at (-1,3);
\coordinate (v7) at (2,2);

    \node[draw,circle, inner sep=2pt] at (v1){};
    \node[draw,circle, inner sep=2pt] at (v2){};
    \node[draw,circle, inner sep=2pt] at (v3){};
    \node[draw,circle, inner sep=2pt] at (v4){};
    \node[draw,circle, inner sep=2pt] at (v5){};
    \node[draw,circle, inner sep=2pt] at (v6){};
    \node[draw,circle, inner sep=2pt] at (v7){};
    \node at (v1) [right]{$v_1$}; 
    \node at (v2) [below right]{$v_2$}; 
    \node at (v3) [below]{$v_3$}; 
    \node at (v4) [below left]{$v_4$}; 
    \node at (v5) [left]{$v_5$}; 
    \node at (v6) [above left]{$v_6$}; 
    \node at (v7) [above ]{$v_7$}; 
    
    \draw [black, thick] (v1) -- (v2);
    \draw [black, thick] (v2) -- (v3);
    \draw [black, thick] (v3) -- (v4);
    \draw [black, thick] (v4) -- (v5);
    \draw [black, thick] (v5) -- (v6);
    \draw [black, thick] (v6) -- (v7);
    \draw [black, thick] (v3) -- (v1);
    \draw [black, thick] (v4) -- (v2);
    \draw [black, thick] (v6) -- (v3);
    \draw [black, thick] (v4) -- (v1);
    \draw [black, thick] (v6) -- (v2);
    \draw [black, thick] (v6) -- (v3);
    \draw [black, thick] (v7) -- (v1);
\end{tikzpicture}
\caption{Before} 
\end{subfigure}
\begin{subfigure}{0.3\textwidth}
\centering
\vspace{0.7in}
\begin{tikzpicture}[scale = .5]
\begin{tabular}{|l|l|} \hline
${\mathbf v_1}$ & ${\mathbf v_2}$ \\ \hline
$v_3$ & $v_6$ \\ \hline
$v_4$ & $v_4$ \\ \hline
$v_7$ & $v_3$ \\ \hline
\end{tabular}
\end{tikzpicture}
\vspace{0.8in}
\caption{Masterlist}\label{fig:Masterlist}
\end{subfigure}
\begin{subfigure}{0.3\textwidth}
\centering
\begin{tikzpicture}[scale = .5]

\coordinate (v1) at (3,0);
\coordinate (v2) at (2,-2);
\coordinate (v3) at (-1,-3);
\coordinate (v4) at (-3,-1);
\coordinate (v5) at (-3,1);
\coordinate (v6) at (-1,3);
\coordinate (v7) at (2,2);

    \node[draw, circle, inner sep=1pt] at (v1) {}; 
    \node[draw, circle, inner sep=1pt] at (v2) {}; 
    \node[draw,circle, inner sep=2pt] at (v3){};
    \node[draw,circle, inner sep=2pt] at (v4){};
    \node[draw,circle, inner sep=2pt] at (v5){};
    \node[draw,circle, inner sep=2pt] at (v6){};
    \node[draw,circle, inner sep=2pt] at (v7){};
    \node at (v1) [right]{$v_1$}; 
    \node at (v2) [below right]{$v_2$}; 
    \node at (v3) [below]{$v_3$}; 
    \node at (v4) [below left]{$v_4$}; 
    \node at (v5) [left]{$v_5$}; 
    \node at (v6) [above left]{$v_6$}; 
    \node at (v7) [above ]{$v_7$}; 
    
    \draw [black, thick] (v3) -- (v4);
    \draw [black, thick] (v4) -- (v5);
    \draw [black, thick] (v5) -- (v6);
    \draw [black, thick] (v7) -- (v3);
    \draw [black, thick] (v4) -- (v6);
    \draw [black, thick] (v4) -- (v7);

\end{tikzpicture}
\caption{After}
\end{subfigure}
\caption{An application of $\textbf{gcds}(\cdot,\{v_1,\;v_2\})$ to a finite graph.}\label{fig:gcdsmovegraph}
\end{figure}

As seen in the example in Figure \ref{fig:gcdsmovegraph}, an application of \textbf{gcds} to a graph with edges always creates a new pair of isolated points. Figure \ref{fig:gcdsmovegraphex2} illustrates that applying \textbf{gcds} may create more than two new isolated points. 

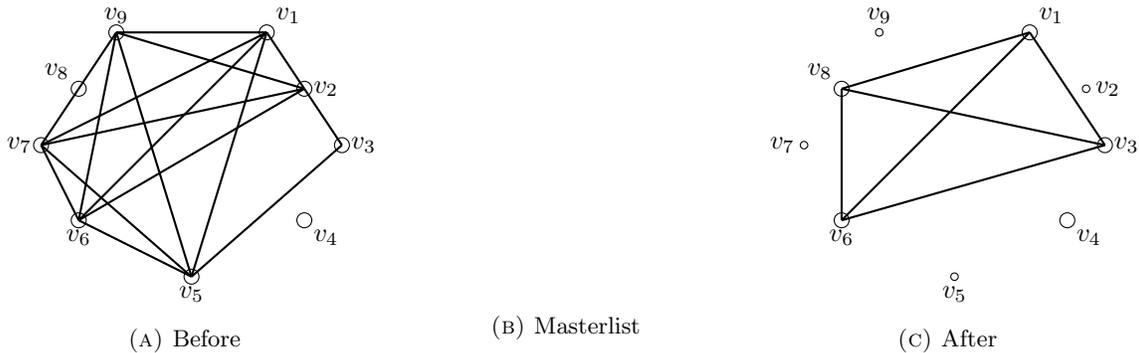
\begin{figure}[ht]
\centering
\begin{subfigure}{0.3\textwidth}
\begin{tikzpicture}[scale = .5]

\coordinate (v1) at (2,3);
\coordinate (v2) at (3,1.5);
\coordinate (v3) at (4,0);
\coordinate (v4) at (3,-2.0);
\coordinate (v5) at (0,-3.5);
\coordinate (v6) at (-3,-2.0);
\coordinate (v7) at (-4,0);
\coordinate (v8) at (-3,1.5);   
\coordinate (v9) at (-2,3);    

    \node[draw,circle, inner sep=2pt] at (v1){};
    \node[draw,circle, inner sep=2pt] at (v2){};
    \node[draw,circle, inner sep=2pt] at (v3){};
    \node[draw,circle, inner sep=2pt] at (v4){};
    \node[draw,circle, inner sep=2pt] at (v5){};
    \node[draw,circle, inner sep=2pt] at (v6){};
    \node[draw,circle, inner sep=2pt] at (v7){};
    \node[draw,circle, inner sep=2pt] at (v8){};
    \node[draw,circle, inner sep=2pt] at (v9){};

    \node at (v1) [above right]{$v_1$}; 
    \node at (v2) [right]{$v_2$}; 
    \node at (v3) [right]{$v_3$}; 
    \node at (v4) [below right]{$v_4$}; 
    \node at (v5) [below]{$v_5$}; 
    \node at (v6) [below]{$v_6$}; 
    \node at (v7) [left ]{$v_7$}; 
    \node at (v8) [above left]{$v_8$}; 
    \node at (v9) [above ]{$v_9$}; 
    
    \draw [black, thick] (v1) -- (v2);
    \draw [black, thick] (v1) -- (v5);
    \draw [black, thick] (v1) -- (v6);
    \draw [black, thick] (v1) -- (v7);
    \draw [black, thick] (v1) -- (v9);
    \draw [black, thick] (v2) -- (v3);
    \draw [black, thick] (v2) -- (v6);
    \draw [black, thick] (v2) -- (v7);
    \draw [black, thick] (v2) -- (v9);
    \draw [black, thick] (v3) -- (v5);
    \draw [black, thick] (v5) -- (v6);
    \draw [black, thick] (v5) -- (v7);
    \draw [black, thick] (v5) -- (v9);
    \draw [black, thick] (v6) -- (v7);
    \draw [black, thick] (v6) -- (v9);
    \draw [black, thick] (v7) -- (v8);
    \draw [black, thick] (v8) -- (v9);

\end{tikzpicture}
\caption{Before} 
\end{subfigure}
\begin{subfigure}{0.3\textwidth}
\centering
\vspace{0.7in}
\begin{tikzpicture}[scale = .5]
\begin{tabular}{|l|l|} \hline
${\mathbf v_2}$ & ${\mathbf v_7}$ \\ \hline
$v_1$ & $v_1$ \\ \hline
$v_3$ & $v_5$ \\ \hline
$v_6$ & $v_6$ \\ \hline
$v_9$ & $v_8$ \\ \hline
\end{tabular}
\end{tikzpicture}
\vspace{0.8in}
\caption{Masterlist}\label{fig:Masterlist}
\end{subfigure}
\begin{subfigure}{0.3\textwidth}
\centering
\begin{tikzpicture}[scale = .5]
\coordinate (v1) at (2,3);
\coordinate (v2) at (3.5,1.5);
\coordinate (v3) at (4,0);
\coordinate (v4) at (3,-2.0);
\coordinate (v5) at (0,-3.5);
\coordinate (v6) at (-3,-2.0);
\coordinate (v7) at (-4,0);
\coordinate (v8) at (-3,1.5);   
\coordinate (v9) at (-2,3);    

    \node[draw,circle, inner sep=2pt] at (v1){};
    \node[draw, circle, inner sep=1pt] at (v2){}; 
    \node[draw,circle, inner sep=2pt] at (v3){};
    \node[draw,circle, inner sep=2pt] at (v4){};
    \node[draw,circle, inner sep=1pt] at (v5){}; 
    \node[draw,circle, inner sep=2pt] at (v6){};
    \node[draw,circle, inner sep=1pt] at (v7){}; 
    \node[draw,circle, inner sep=2pt] at (v8){};
    \node[draw,circle, inner sep=1pt] at (v9){}; 

    \node at (v1) [above right]{$v_1$}; 
    \node at (v2) [right]{$v_2$}; 
    \node at (v3) [right]{$v_3$}; 
    \node at (v4) [below right]{$v_4$}; 
    \node at (v5) [below]{$v_5$}; 
    \node at (v6) [below]{$v_6$}; 
    \node at (v7) [left ]{$v_7$}; 
    \node at (v8) [above left]{$v_8$}; 
    \node at (v9) [above ]{$v_9$}; 
    
    \draw [black, thick] (v1) -- (v3);
    \draw [black, thick] (v1) -- (v6);
    \draw [black, thick] (v1) -- (v8);
    \draw [black, thick] (v3) -- (v6);
    \draw [black, thick] (v3) -- (v8);
    \draw [black, thick] (v6) -- (v8);

\end{tikzpicture}
\caption{After}
\end{subfigure}
\caption{Application of $\textbf{gcds}(\cdot,\{v_2,\;v_7\})$ to a finite graph, creating four new isolated vertices.}\label{fig:gcdsmovegraphex2}
\end{figure}

\vspace{0.1in}

Let $\mathcal{M}_n$ be the set of graphs with vertices the set of $n-1$ pointers associated with elements of $\textsf{S}_n$. Let 
  $ \Gamma_n:\textsf{S}_n \longrightarrow \mathcal{M}_n$
be the map that associates with each permutation $\alpha\in\textsf{S}_n$ its corresponding move graph. We now arrive at our main motivation for introducing \textbf{gcds}: 

\begin{theorem}\label{movegraphcds}
Let $n$ be a positive integer. For any permutation $\alpha\in\textsf{S}_n$ and for any interlocking pair of pointers $p$ and $q$ in $\alpha$, $\Gamma_n(\textbf{cds}_{p,q}(\alpha)) = \textbf{gcds}(\Gamma_n(\alpha),\{p,\; q\})$.
\end{theorem}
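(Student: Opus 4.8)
The plan is to realise both sides of the identity as interlacement graphs of a double--occurrence word and to track how \textbf{cds} acts on that word. To $\alpha=[a_1,\dots,a_n]$ attach the length-$2n$ \emph{pointer word} $w_\alpha = t_1 h_1 t_2 h_2 \cdots t_n h_n$, where $t_i=(a_i-1,a_i)$ and $h_i=(a_i,a_i+1)$ are the tail and head pointers of $a_i$. Every pointer $(j,j+1)$ with $1\le j\le n-1$ occurs in $w_\alpha$ exactly twice, once as the head of the entry $j$ and once as the tail of the entry $j+1$, whereas $(0,1)$ and $(n,n+1)$ occur once and may be discarded. Reinstating the entries recovers $\alpha$ read with its pointers interleaved, so two pointers interlock in the sense of Section~1 precisely when their two occurrences alternate in $w_\alpha$; hence $\Gamma_n(\alpha)$ is exactly the interlacement graph of $w_\alpha$ --- vertices are the doubly-occurring letters, and two are adjacent iff they interlace.

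The next step is to determine the effect of $\textbf{cds}_{p,q}$ on $w_\alpha$. Checking the four defining cases, one may name the pair so that the two occurrences of $p$ and the two of $q$ appear in $w_\alpha$ in the alternating order $p\,q\,p\,q$, and write accordingly $w_\alpha = U_0\,p\,U_1\,q\,U_2\,p\,U_3\,q\,U_4$. The assertion to verify --- and this is the one genuinely laborious part --- is that in every one of Cases~1--4 the word of $\textbf{cds}_{p,q}(\alpha)$ has the two occurrences of $p$ adjacent and the two occurrences of $q$ adjacent, and that after deleting those two now--trivial chords it equals $U_0\,U_3\,U_2\,U_1\,U_4$; i.e. the net effect on $w_\alpha$ is the single block interchange $U_1\leftrightarrow U_3$ (with $U_0,U_2,U_4$ unmoved) together with the trivialisation of $p$ and $q$. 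In each case this is read off the displayed formula for $\textbf{cds}_{p,q}$: the two blocks that get interchanged have pointer-word contents $U_1$ and $U_3$ up to boundary pointer symbols, and after the swap $x$ sits immediately to the left of $x+1$ and $y$ immediately to the left of $y+1$, which is exactly what forces the two occurrences of $p$, and the two of $q$, to become adjacent. The symmetry $p\leftrightarrow q$ together with a left--right reversal of the word reduces the possible orderings of the four occurrences to the one used here, and corresponds to the symmetry of \textbf{gcds} in its two distinguished vertices. These two adjacency facts already establish clause~(1): $p$ and $q$ are isolated vertices of $\Gamma_n(\textbf{cds}_{p,q}(\alpha))$.

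It remains to match the adjacencies among the other letters with Cases a and b.1--b.3. Fix a pointer $r\notin\{p,q\}$ and let $a,b,c$ count the occurrences of $r$ lying in $U_1,U_2,U_3$ respectively, so $a+b+c\le 2$. Since the two $p$'s of $w_\alpha$ flank $U_1\,q\,U_2$ and the two $q$'s flank $U_2\,p\,U_3$, we get $r\sim p$ in $\Gamma_n(\alpha)$ iff $a+b=1$, and $r\sim q$ iff $b+c=1$; consequently the first column of the master list $\textsf{M}(p,q)$ (the master list of the definition, with $\{x,y\}$ there taken to be $\{p,q\}$) consists of the letters $r$ with $a+b=1$, the second of those with $b+c=1$, and $r$ occurs in $\textsf{M}(p,q)$ a total of $[a+b=1]+[b+c=1]$ times. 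On the other hand, since the rearrangement $U_0U_1U_2U_3U_4\mapsto U_0U_3U_2U_1U_4$ reverses the relative order of an occurrence in $U_i$ and one in $U_j$ exactly when $i\ne j$ and $i,j\in\{1,2,3\}$, the interlacement parity of a pair $\{r,s\}$ changes by $kk'-(aa'+bb'+cc')\pmod 2$, where $(a,b,c)$, $(a',b',c')$ are the data of $r$, $s$ and $k=a+b+c$, $k'=a'+b'+c'$. Comparing this quantity with the column/parity recipe of \textbf{gcds} is a finite check over the ten possible triples $(a,b,c)$: the change is $0$ when $r$ or $s$ is absent from $\textsf{M}(p,q)$ (Case a), is $1$ when both lie in different columns (b.1), and is $0$ or $1$ according as the total number of their occurrences in $\textsf{M}(p,q)$ is even or odd when both lie in a common column (b.2/b.3). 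With the previous paragraph this gives $\Gamma_n(\textbf{cds}_{p,q}(\alpha)) = \textbf{gcds}(\Gamma_n(\alpha),\{p,q\})$.

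The heart of the argument is the case bookkeeping just described: establishing, uniformly across the four defining cases of \textbf{cds}, the normal form ``block interchange $U_1\leftrightarrow U_3$ plus trivialisation of $p$ and $q$.'' Once that is in hand the word model of the first paragraph is routine and the final comparison is a single small table indexed by $(a,b,c)$ and $(a',b',c')$. A route that sidesteps the word altogether is to reason directly about pairs of pointers --- deciding, for each pair $\{r,s\}$, whether they interlock in $\textbf{cds}_{p,q}(\alpha)$ in terms of whether they interlock in $\alpha$ and of their incidences with $p$ and $q$ --- but this only trades the word bookkeeping for a longer, equally elementary case analysis, so I would use it only as a fallback.
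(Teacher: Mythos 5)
Your proposal is correct, and it shares the paper's key structural step: writing the pointer occurrences as $U_0\,p\,U_1\,q\,U_2\,p\,U_3\,q\,U_4$ (the paper's blocks $A,B,C,D,E$) and observing that $\textbf{cds}_{p,q}$ makes the two occurrences of $p$ adjacent, likewise for $q$, and permutes the remaining material to $U_0\,U_3\,U_2\,U_1\,U_4$ --- a normal form both you and the paper extract from the four defining cases of \textbf{cds} without much ceremony. Where you genuinely depart from the paper is in how the surviving adjacencies are compared with \textbf{gcds}. The paper fixes a pair $\{r,s\}$ and runs seven cases according to which of $p,q$ each of $r,s$ interlocks, with sub-cases on which blocks contain the four occurrences $r_1,r_2,s_1,s_2$. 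You instead encode each pointer $r$ by its occurrence counts $(a,b,c)$ in $U_1,U_2,U_3$, note that $r\sim p$ iff $a+b=1$ and $r\sim q$ iff $b+c=1$ (which makes the correspondence with the two columns of $\textsf{M}(p,q)$ transparent), observe that interlacement in a double-occurrence word is the parity of the number of inverted occurrence pairs, and compute the change of that parity under the block permutation as $kk'-(aa'+bb'+cc')\bmod 2$. I checked this quantity against Cases a and b.1--b.3 of the \textbf{gcds} definition over the admissible triples and it matches, so your finite table does close the argument. The trade-off is clear: your route replaces the paper's long case analysis by one conceptual input (interlacement as inversion parity) plus a short uniform computation, and is considerably easier to audit; the paper's version is more elementary but harder to verify exhaustively. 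The one part you must actually write out is the verification, in each of the four displayed cases of $\textbf{cds}_{p,q}$, that the pointer word of $\textbf{cds}_{p,q}(\alpha)$ reduces to $U_0U_3U_2U_1U_4$ after deleting the two trivialised chords; this is true (the adjacent pair $p\,p$ does not land in the same place in every case, but that is immaterial once $p$ is isolated), and it is the same unglamorous bookkeeping the paper also compresses into one sentence.
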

\begin{proof} 
Let $\alpha$ be a permutation.  Let $x,\; (x+1),\; y,\; (y+1)$ be entries in $\alpha$.  Let $p=(x,x+1)$ be the common pointer of the entries $x$ and $x+1$; let $q=(y,y+1)$ be the common pointer of the entries $y$ and $y+1$. Assume that $p$ and $q$ interlock in $\alpha$. Thus, $\{p,q\}$ is an edge in $\Gamma_{n}(\alpha)$. 

Writing the pointers in $\alpha$ in their order of occurrence gives the structure
$\underbrace{\hspace{.5cm}}_{A} p, \underbrace{\hspace{.5cm}}_{B}, q,  
   \underbrace{\hspace{.5cm}}_{C}, p, \underbrace{\hspace{.5cm}}_{D}, q, 
   \underbrace{\hspace{.5cm}}_{E}$,
 where the capital letters mark blocks of pointers. Doing the same with the pointers of  $\alpha' = \textbf{cds}_{p,q}(\alpha)$ gives the structure 
$\underbrace{\hspace{.5cm}}_{A} p,p,
   \underbrace{\hspace{.5cm}}_{D}, \underbrace{\hspace{.5cm}}_{C},
   \underbrace{\hspace{.5cm}}_{B}, q, q, \underbrace{\hspace{.5cm}}_{E}$.
\newline Let $r$ and $s$ be two pointers in $\alpha$, where the first instance of r is to the left of the first instance of s.  We will show that, regardless of the position of $r$ and $s$ relative to $p$ and $q$ in $\alpha$, $\{r,s\}$ is an edge in $\Gamma_n(\textbf{cds}_{p,q}(\alpha))$ if, and only if, $\{r,s\}$ is an edge in $\textbf{gcds}(\Gamma_n(\alpha),\{p,\; q\})$.

We treat the case when $r$ and $s$ interlock in $\alpha$, leaving the other case to the reader. Thus, assume that $r$ and $s$ interlock in $\alpha$. Thus, $\{r,\; s\}$ is an edge in $\Gamma_n(\alpha)$.  Let $r_{1},s_{1}$ denote the left instances of $r$ and $s$;  let $r_{2},s_{2}$ denote the right instances of $r$ and $s$.  So these pointers appear in the form 
$r_{1},\ldots s_{1},\ldots r_{2},\ldots s_{2}$.
There are seven relevant cases for how $r$ and $s$ relate to $p$ and $q$: 
\vspace{0.1in}

In Cases I-III, $s$ interlocks neither $p$ nor $q$.  So neither$\{p,s\}$ nor $\{q,s\}$ is an edge in $\Gamma_{n}(\alpha)$.  Therefore, $\{r,s\}$ is an edge in $\textbf{gcds}(\Gamma_n(\alpha),\{p,\; q\})$.  

{\flushleft Case I:} r and s interlock neither $p$ nor $q$.  Therefore, $r$ and $s$ could be contained in two possible blocks of $\alpha$:
\newline (a) All instances of $r$ and $s$ are found in $A$ or $E$.
\newline (b) All instances of $r$ and $s$ are found in one block.  
\begin{quote} In both I(a) and I(b) rearrangement of the blocks $B$, $C$, and $D$ does not affect the arrangement of $r$ and $s$, and so in $\alpha'$ $r$ and $s$ still interlock.  
Thus, in Case I, $\{r,s\}$ is an edge in $\Gamma_{n}(\alpha')$.
\end{quote}

{\flushleft Case II:} $s$ interlocks with neither $p$ nor $q$, while $r$ interlocks with $p$ but not with $q$.
\newline (a) $r_{1}$ is in $A$ and $r_{2}$ is in $B$.
\begin{quote} $s_1$ is in $A$ or $B$: If $s_1$ is in $A$, then $s_2$ is in $E$, while if $s_1$ is in $B$, then $s_2$ is in $B$.  
\newline Blocks $A,B$, and $E$ do not change order relative to each other, so in either situation $r$ and $s$ still interlock in $\alpha'$.    
\end{quote}
(b) $r_{1}$ is in $C$ and $r_{2}$ is in $D$.
\begin{quote} So $s_1$ is in $D$: Therefore $s_2$ is in $D$.  As blocks $C$ and $D$ switch order, in $\alpha'$ $r$ and $s$ are arranged as follows: $s_1$, $r_2,s_2,r_1$. 
\newline Then in Case II, $r$ and $s$ remain interlocked in $\alpha'$, whence $\{r,s\}$ is an edge in $\Gamma_{n}(\alpha')$. 
\end{quote}

{\flushleft Case III:} s interlocks with neither $p$ nor $q$, while $r$ interlocks with both $p$ and $q$.
 \newline (a) $r_{1}$ is in $A$ and $ r_{2}$ is in $C$. \begin{quote} Now $s_1$ is in $A$ or $C$. If $s_1$ is in $A$, then $s_2$ is in $E$, while if $s_1$ is in $C$, then $s_2$ is in $C$. \end{quote}  (b) $r_{1}$ is in C and $r_2$ is in E. \begin{quote} So $s_1$ and $s_2$ are in E. \newline Since the order of $A, C$, and $E$ is unaffected by $\textbf{cds}_{p,q}(\alpha)$, $r$ and $s$ interlock in $\alpha'$.\end{quote}Therefore, in Case III, $r$ and $s$ interlock in $\alpha'$ whence $\{r,s\}$ is an edge in $\Gamma_{n}(\alpha')$.
\vspace{0.1in}

In Cases IV-VI, both $r$ and $s$ interlock $p$ or $q$.  

{\flushleft Case IV:} both $r$ and $s$ interlock with $p$ but both do not interlock with $q$.
In $\Gamma_n(\alpha)$ $s$ and $r$ both have an edge with $p$, but neither have an edge with $q$.  So in $\textbf{gcds}(\Gamma_n(\alpha),\{p,\; q\})$, $\{r,s\}$ will be an edge.
\newline Now in the permutation $\alpha$ we have one of the following two cases: 
\newline (a) $r_{1}$ is in $A$ and $r_2$ is in B.  So $s_{1}$ is in $A$ or B. 
\begin{quote} If $s_1$ is in A, then $s_2$ is in B.\end{quote} 
\begin{quote} If $s_1$ is in B, then $s_2$ is in E.\end{quote}
\begin{quote} The order of the pointers is unchanged in $\alpha'$ as the order of $A$, $B$ and $E$ is unchanged.
\end{quote}
(b)  $r_{1}$ is in $C$ and $r_{2}$ is in $D$.  So $s_1$ is in C and $s_{2}$ is in $D$.
\begin{quote} The pointers have form $r_2$,\ldots, $s_2,\ldots,r_1,\ldots,s_1$ in $\alpha'$. 
\newline Thus, in Case IV, $r$ and $s$ interlock in $\alpha'$, meaning $\{r,s\}$ is an edge in $\Gamma_{n}(\alpha')$.
\end{quote}

{\flushleft Case V:} $r$ interlocks with $p$ but not $q$, and $s$ interlocks with $q$ but not $p$.
\newline In $\Gamma_{n}(\alpha)$, $\{r,p\}$ and $\{s,q\}$ are edges, so r and s both appear in the masterlist.  However, r and s do not appear in the same column of the master list, and so in $\textbf{gcds}(\Gamma_n(\alpha),\{p,\; q\})$, $\{r,s\}$ is not an edge.
\newline (a) $r_{1}$ is in $A$ and $r_2$ is in $B$.  So $s_1$ is in $A$ or $B$.
\begin{quote} If $s_1$ is in A, then $s_2$ is in D.  B and D reverse position, yielding $r_{1}$, $s_1$, $s_2$, $r_2$.
\end{quote}
\begin{quote} If $s_1$ is in B, then $s_2$ is in C.  B and C reverse position, yielding $r_{1}$, $s_2$, $s_1$, $r_2$.
\end{quote}
(b) $r_{1}$ is in $C$ and $r_2$ is in $D$.  So $s_1$ is in $D$ and $s_2$ is in $E$.
\begin{quote} $D$ and $C$ reverse position, yielding $s_{1}$, $r_2$, $r_1$, $s_2$.
\end{quote}
Then in Case V, $r$ and $s$ do not interlock in $\alpha'$, and so $\{r,s\}$ is not an edge in $\Gamma_{n}(\alpha')$.

{\flushleft Case VI:} $r$ interlocks with both $p$ and $q$, while $s$ interlocks with $p$ but not $q$.
\newline As $\{r,p\}$, $\{r,q\}$ and $\{s,p\}$ all are edges in $\Gamma_n(\alpha)$, $r$ and $s$ appear an odd number of times and in both columns of the Masterlist of $p$ and $q$.
Therefore, $\{r,s\}$ is not an edge in $\textbf{gcds}(\Gamma_n(\alpha),\{p,\; q\})$.
\newline In the permutation $\alpha: \newline r_{1}$ must be in $A$ and $r_2$ must be in $C$ \begin{quote}Then $s_1$ is in B or C.  \newline (a) If $s_1$ is in $B$, then $s_2$ is in $E$.  So in $\alpha'$, the order will be $r_{1},r_{2},s_{1},s_{2}$. \newline (b) If $s_1$ is in C, then $s_2$ is in D. So in $\alpha'$, the order will be $r_{1},s_{2},s_{1},r_{2}$.\end{quote} 
Then in Case VI, $r$ and $s$ do not interlock in $\alpha'$, and so $\{r,s\}$ is not an edge in $\Gamma_{n}(\alpha')$.

{\flushleft Case VII:}  Each of $r$ and $s$ interlocks both $p$ and $q$.
\newline Then $\{r,p\}$, $\{r,q\}$, $\{s,p\}$ and $\{s,q\}$ all are edges in $\Gamma_n(\alpha)$. Thus $r$ and $s$ appear an even number of times and in both columns of the Masterlist of $p$ and $q$ in $\Gamma_n(\alpha)$.  Therefore, $\{r,s\}$ is still an edge in $\textbf{gcds}(\Gamma_n(\alpha),\{p,\; q\})$.
\newline In the permutation $\alpha$ there are two possibilities for the placement of r:
\newline (a) $r_1$ is in $A$ and $r_2$ is in C.  So $s_1$ is in A or C.  \begin{quote} If $s_1$ is in A, then $s_2$ is in $C$.\end{quote} \begin{quote} If $s_1$ is in C, then $s_2$ is in E.\end{quote} (b) $r_1$ is in C and $r_2$ is in $E$, so $s_1$ is in C and $s_2$ are in $E$. \newline $C$ does not change its order relative to $E$ or $A$, so $r$ and $s$ interlock in $\alpha'$.  
\newline Thus, in Case VII, $\{r,s\}$ is an edge in $\Gamma_n(\alpha')$.
\vspace{0.1in}

We find that in all cases $\{r,\; s\}$ is an edge in $\Gamma_n(\alpha^{\prime})$ if, and only if, it is an edge in $\textbf{gcds}(\Gamma_n(\alpha),\{p,\; q\})$. This completes the proof of Theorem 2.
\end{proof}


\section{The \textbf{gcds} game on graphs}

For expository convenience we now define the following modification $\textbf{gcds}_2$ of \textbf{gcds}: Let $\mathcal{G}= (V,\;E)$ be a fixed finite graph and let $\{a,\; b\}$ be an edge of  $\mathcal{G}$. Compute the graph $(V,E^{\prime}) = \textbf{gcds}(\mathcal{G},\{a,\; b\})$. Delete a vertex $v\in V$ if $v$ appeared in the masterlist as \emph{not} the only vertex in the master list, and is isolated after the application of \textbf{gcds}. Let $V^{\prime}$ be the remaining set of vertices. Then we define $\textbf{gcds}_2(\mathcal{G},\{a,\; b\}):=(V^{\prime},\; E^{\prime})$.

To define the game $\textsf{GCDS}(\mathcal{G},\textsf{A})$, let \textsf{A} be a subset of the set of vertices of the graph $\mathcal{G}$. Suppose after a number of rounds the current graph is 
$\mathcal{C} = (V^{\prime},\; E^{\prime})$. Then the next player to play selects an edge $\{a,\; b\}$ of $\mathcal{C}$, and plays the move $\textbf{gcds}_2(\mathcal{C},\{a,\; b\})$. The game starts with ONE making such a move for the initial graph $\mathcal{G}$. The players alternate in making moves until there are no edges left.

Let $\mathcal{G}^{\prime}$ denote the graph when the game ends\footnote{$\mathcal{G}^{\prime}$ has an empty set of edges.}. If the set of vertices of $\mathcal{G}^{\prime}$ is nonempty and a subset of \textsf{A}, then ONE wins. Else, TWO wins.

As an example, consider the game on the following graph $\mathcal{G} = (V,\; E)$:
\begin{itemize}
\item{$V = \{1,\; 2,\; 3,\; 4,\; 5,\; 6,\; 7\}$}
\item{$E = \{\{1,\; 2\} ,\; \{1,\; 3\} ,\; \{1,\; 4\} ,\; \{1,\; 7\} ,\; \{2,\; 3\} ,\; \{2,\; 4\} ,\;\{2,\; 6\} ,\; \{3,\; 4\} ,\;  \{3,\; 6\} ,\; \{4,\; 5\} ,\; \{5,\; 6\} ,\; \{6,\; 7\} \}$.}
\end{itemize}
The set of vertices favorable to ONE is $\textsf{A} = \{1,\; 3,\; 5,\; 7\}$. Figure \ref{fig:graphcdsgame} (A) depicts the graph $\mathcal{G}$ with vertices in \textsf{A} marked with solid circles. (B), (C) and (D) depict the results of moves by ONE and TWO. The game terminates with ONE winning the play as the sole surviving vertex belongs to \textsf{A}.

\begin{figure}[h]
\begin{subfigure}{0.22\textwidth}
\centering
\begin{tikzpicture}[scale = .4]
\coordinate (v1) at (3,0);
\coordinate (v2) at (2,-2);
\coordinate (v3) at (-1,-3);
\coordinate (v4) at (-3,-1);
\coordinate (v5) at (-3,1);
\coordinate (v6) at (-1,3);
\coordinate (v7) at (2,2);
    \node[draw,black, very thick, circle, inner sep=2pt] at (v1){};
    \node[draw,circle, densely dotted, very thick, inner sep=2pt] at (v2){};
    \node[draw,circle, very thick, inner sep=2pt] at (v3){};
    \node[draw,circle, densely dotted, very thick, inner sep=2pt] at (v4){};
    \node[draw,circle, very thick, inner sep=2pt] at (v5){};
    \node[draw,circle, densely dotted, very thick, inner sep=2pt] at (v6){};
    \node[draw,circle, very thick, inner sep=2pt] at (v7){};
    \node at (v1) [right]{1};
    \node at (v2) [below right]{2};
    \node at (v3) [below]{3};
    \node at (v4) [below left]{4};
    \node at (v5) [left]{5};
    \node at (v6) [above left]{6};
    \node at (v7) [above ]{7};    
    \draw [black, thick] (v1) -- (v2);
    \draw [black, thick] (v2) -- (v3);
    \draw [black, thick] (v3) -- (v4);
    \draw [black, thick] (v4) -- (v5);
    \draw [black, thick] (v5) -- (v6);
    \draw [black, thick] (v6) -- (v7);
    \draw [black, thick] (v3) -- (v1);
    \draw [black, thick] (v4) -- (v2);
    \draw [black, thick] (v6) -- (v3);
    \draw [black, thick] (v4) -- (v1);
    \draw [black, thick] (v6) -- (v2);
    \draw [black, thick] (v6) -- (v3);
    \draw [black, thick] (v7) -- (v1);
\end{tikzpicture}
\caption{$\mathcal{G} = (V,\; E)$.}
\end{subfigure}
\begin{subfigure}{0.22\textwidth}
\centering
\begin{tikzpicture}[scale = .4]
\coordinate (v1) at (3,0);
\coordinate (v2) at (2,-2);
\coordinate (v3) at (-1,-3);
\coordinate (v4) at (-3,-1);
\coordinate (v5) at (-3,1);
\coordinate (v6) at (-1,3);
\coordinate (v7) at (2,2);
    \node[draw,circle, densely dotted, very thick,  inner sep=2pt] at (v1){};
    \node[draw,circle, very thick, inner sep=2pt] at (v3){};
    \node[draw,circle, very thick, inner sep=2pt] at (v5){};
    \node[draw,circle, densely dotted, very thick, inner sep=2pt] at (v6){};
    \node[draw,circle, very thick, inner sep=2pt] at (v7){};
    \node at (v1) [right]{1};
    \node at (v3) [below]{3};
    \node at (v5) [left]{5};
    \node at (v6) [above left]{6};
    \node at (v7) [above ]{7};
        \draw [black, thick] (v3) -- (v5);
    \draw [black, thick] (v5) -- (v1);
    \draw [black, thick] (v6) -- (v7);
    \draw [black, thick] (v3) -- (v1);
    \draw [black, thick] (v6) -- (v1);
    \draw [black, thick] (v7) -- (v1);
\end{tikzpicture}
\caption{ONE's move: $\mathcal{O}_1$.}
\end{subfigure}
\begin{subfigure}{0.22\textwidth}
\centering
\begin{tikzpicture}[scale = .4]
\coordinate (v1) at (3,0);
\coordinate (v2) at (2,-2);
\coordinate (v3) at (-1,-3);
\coordinate (v4) at (-3,-1);
\coordinate (v5) at (-3,1);
\coordinate (v6) at (-1,3);
\coordinate (v7) at (2,2);

    \node[draw,circle, very thick, inner sep=2pt] at (v5){};
    \node[draw,circle, densely dotted, very thick, inner sep=2pt] at (v6){};
    \node[draw,circle, very thick, inner sep=2pt] at (v7){};
      \node at (v3)[below]{}; 
    \node at (v5) [left]{5};
    \node at (v6) [above left]{6};
    \node at (v7) [above ]{7};
    \draw [black, thick] (v7) -- (v5);
    \draw [black, thick] (v5) -- (v6);
    \draw [black, thick] (v6) -- (v7);
\end{tikzpicture}
\caption{\small TWO's move: $\mathcal{T}_1$.}
\end{subfigure}
\begin{subfigure}{0.22\textwidth}
\centering
\begin{tikzpicture}[scale = .4]
\coordinate (v1) at (3,0);
\coordinate (v2) at (2,-2);
\coordinate (v3) at (-1,-3);
\coordinate (v4) at (-3,-1);
\coordinate (v5) at (-3,1);
\coordinate (v6) at (-1,3);
\coordinate (v7) at (2,2);

    \node[draw,circle, very thick, inner sep=2pt] at (v5){};
    \node at (v1) [right]{}; 
    \node at (v2) [below right]{}; 
    \node at (v3)[below]{}; 
    \node at (v4) [below left]{}; 
    \node at (v5) [left]{5};
    \node at (v6) [above left]{}; 
    \node at (v7) [above ]{}; 
\end{tikzpicture}
\caption{ONE's move: $\mathcal{O}_2$.}
\end{subfigure}
\caption{(A) depicts the original graph. Solid circles denote vertices favorable to ONE. (B) ONE moves $\{2,\; 4\}$. (C) TWO responds with $\{1,\; 3\}$. (D) ONE responds with $\{6,\; 7\}$.}\label{fig:graphcdsgame}
\end{figure}
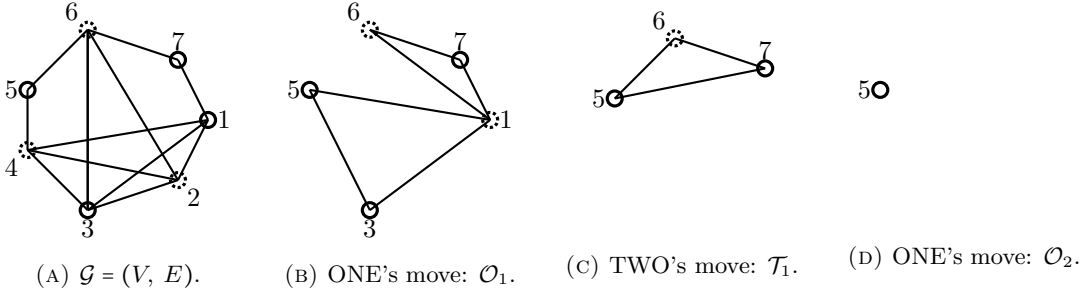

\begin{theorem}\label{cdsgraph}
For each finite graph $\mathcal{G} = (V,\; E)$ and each set $\textsf{A}\subseteq V$, 
the game $\emph{\textsf{GCDS}}(\mathcal{G},\textsf{A})$ is determined\footnote{A game is determined if one of the players has a winning strategy.}.
\end{theorem}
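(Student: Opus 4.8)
The plan is to reduce the theorem to Zermelo's Theorem by showing that every play of $\textsf{GCDS}(\mathcal{G},\textsf{A})$ is finite. Since each terminal position --- a graph with no edges --- is declared a win for exactly one of the two players, the game has no drawn outcomes, and the backward-induction form of Zermelo's Theorem then yields that one of ONE, TWO has a winning strategy.

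To obtain finiteness I would track, for the current graph $\mathcal{H}$, the number $\nu(\mathcal{H})$ of its vertices of positive degree, and show that one move strictly decreases this number: $\nu(\textbf{gcds}_2(\mathcal{H},\{x,y\})) \le \nu(\mathcal{H}) - 2$. The one fact about \textbf{gcds} that is needed is that a vertex isolated in $\mathcal{H}$ remains isolated after the move. Indeed, if $v \notin \{x,y\}$ has no neighbor in $\mathcal{H}$, then $v$ is a neighbor of neither $x$ nor $y$, so $v$ does not occur in the master list $\textsf{M}(x,y)$; hence for every other vertex $w$ the pair $\{v,w\}$ falls under Case a of the definition of \textbf{gcds}, so $\{v,w\}$ is an edge of $\textbf{gcds}(\mathcal{H},\{x,y\})$ if and only if it was an edge of $\mathcal{H}$ --- and it was not. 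Passing from \textbf{gcds} to $\textbf{gcds}_2$ only discards further vertices that are isolated after the \textbf{gcds} step, so it never removes a positive-degree vertex and does not disturb this conclusion. On the other hand, rule (1) in the definition makes $x$ and $y$ isolated in the resulting graph, while both had positive degree in $\mathcal{H}$ since $\{x,y\}$ was the chosen edge. Consequently the set of positive-degree vertices of the new graph is contained in that of $\mathcal{H}$ with $x$ and $y$ removed, which gives the inequality.

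Finally I would assemble the bound. A position admits a move precisely when it still has an edge, that is, when $\nu > 0$; starting from $\mathcal{G} = (V,E)$ we have $\nu(\mathcal{G}) \le |V|$; and $\nu$ drops by at least $2$ each round. Hence the game terminates after at most $\lfloor |V|/2 \rfloor$ rounds, and since only finitely many edges are available to the mover at each round, the full game tree is finite. Labeling each leaf by the winning player --- ONE when the set of surviving vertices is nonempty and contained in $\textsf{A}$, and TWO otherwise --- and propagating values up the tree by the usual minimax argument (Zermelo's Theorem, \cite{Z}) shows that the root has a determined value, which is the assertion of the theorem. I do not expect a genuine obstacle: the whole argument rests on the elementary observation that isolated vertices stay isolated, and the only point I would take care to spell out is that an isolated vertex cannot appear in the master list, so that its incident non-edges are reproduced verbatim under Case a.
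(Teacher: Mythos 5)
Your proof is correct and follows the same route as the paper, which simply cites Zermelo's theorem for finite two-player win--lose games of perfect information. The only difference is that you explicitly verify finiteness of every play (via the observation that each $\textbf{gcds}_2$ move strictly decreases the number of positive-degree vertices by at least two, since isolated vertices never appear in the master list and so stay isolated), a detail the paper leaves implicit; this verification is sound.
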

\begin{proof} This fact follows from Zermelo's classical theorem that in a finite win-loose two-player game of perfect information one of the players has a winning strategy \cite{Z}. 
\end{proof}

Theorem \ref{cdsgraph} raises the following decision problem:\\
\vspace{0.2cm}

\begin{center}
\parbox{0.8\textwidth}
{
{\flushleft{\bf GRAPH GAME $\textsf{GCDS}$:}}\\
{\flushleft{INSTANCE:}} A finite graph $\mathcal{G}=(V,E)$ and a set \textsf{A}.  
{\flushleft{QUESTION:}} Does ONE have a winning strategy in the game $\textsf{GCDS}(\mathcal{G},\textsf{A})$? 
  }
\end{center}
\vspace{0.2cm}

{\flushleft{}}  We solved only a special case of this decision problem. Towards describing this result we introduce the following terminology and notation: Graphs $\mathcal{F}_1 = (V_1, E_1)$ and $\mathcal{F}_2 = (V_2, E_2)$ are \emph{isomorphic} if there is a bijective function $f:V_1\longrightarrow V_2$ such that for all $x$ and $y$ in $V_1$, $\{x,\; y\}\in E_1$ if, and only if, $\{f(x),\; f(y)\}\in E_2$. 

Recursively define the graphs $(\mathcal{G}_n:n\in \mathbb{N})$  as follows:
{\flushleft{$\mathcal{G}_1 = (V_1,\; E_1)$} where: $V_1 = \{1,\; 2,\; 3\}$
 and 
 $E_1 = \{\{1,\; 2\},\; \{1,\; 3\},\; \{2,\; 3\}\}$.

{\flushleft{Assume that $\mathcal{G}_n= (V_n,\; E_n)$ has been defined with $V_n = \{1,\; 2,\; \cdots,\; 2n+1\}$, and define $\mathcal{G}_{n+1} = (V_{n+1},\; E_{n+1})$ by }}
\begin{quote}
  $V_{n+1} = V_n\bigcup\{2n+2,\; 2n+3 \}$ 
and 
$E_{n+1} = E_n \bigcup \{\{2n+1,\; 2n+2\},\; \{2n+1,\; 2n+3\},\; \{2n+2,\; 2n+3\}\}.$
\end{quote}

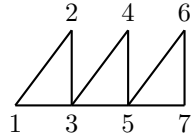
\begin{figure}[h]
\centering
\begin{tikzpicture}[scale = .5]

    \coordinate (v1) at (0,0);
    \coordinate (v2) at (1.5,0);
    \coordinate (v3) at (1.5,2);
    \coordinate (v4) at (3,0);
    \coordinate (v5) at (3,2);
    \coordinate (v6) at (4.5,0);
    \coordinate (v7) at (4.5,2);
    
    \node at (v1) [below]{1};
    \node at (v2) [below]{3};
    \node at (v3) [above]{2};
    \node at (v4) [below]{5};
    \node at (v5)  [above]{4};
    \node at (v6) [below]{7};
    \node at (v7)  [above]{6};
    
    \draw [black, thick] (v1) -- (v2);
    \draw  [black, thick]  (v1) -- (v3);
    \draw  [black, thick]  (v3) -- (v2);
    \draw [black, thick] (v2) -- (v4);
    \draw  [black, thick]  (v2) -- (v5);
    \draw  [black, thick]  (v4) -- (v5);
    \draw  [black, thick]  (v4) -- (v6);
    \draw  [black, thick]  (v4) -- (v7);
    \draw  [black, thick]  (v6) -- (v7);

\end{tikzpicture}
\caption{The graph $\mathcal{G}_3$. $\mathcal{G}_1$ consists of the leftmost triangle, and $\mathcal{G}_2$ of the leftmost two triangles}\label{fig:G1G2G3}
\end{figure}

\begin{theorem}\label{gcdsonclass}
Let $n>1$ be an integer. Let $\mathcal{F}$ be a graph isomorphic to $\mathcal{G}_n$. Let $\{u,\; v\}$ be an edge of $\mathcal{F}$. Then $\textbf{gcds}_2(\mathcal{F},\{u,\; v\})$ is isomorphic to $\mathcal{G}_{n-1}$.
\end{theorem}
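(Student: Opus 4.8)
The plan is to prove the statement by analyzing the structure of $\mathcal{G}_n$ directly. Since $\mathcal{G}_n$ is a chain of $n$ triangles glued along successive ``hinge'' vertices $3,5,7,\dots,2n-1$ (plus the terminal vertices $2n$ and $2n+1$), every edge of $\mathcal{F}$ falls into one of a small number of orbit types under the automorphism group: an edge internal to the first triangle, an edge internal to the last triangle, an ``end'' edge not touching a hinge, a ``hinge'' edge joining two hinge vertices of adjacent triangles, or an edge of a middle triangle incident to exactly one hinge. By transporting along the isomorphism $\mathcal{F}\cong\mathcal{G}_n$, it suffices to verify the claim for $\mathcal{G}_n$ itself and for one representative edge of each type. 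First I would fix notation for the vertices and hinges of $\mathcal{G}_n$ and make this case list explicit, reducing to at most four or five concrete computations.

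Next, for each representative edge $\{u,v\}$ I would write down the master list $\textsf{M}(u,v)$: the first column is the neighbors of $u$ other than $v$, the second column the neighbors of $v$ other than $u$. In $\mathcal{G}_n$ each non-terminal vertex has small degree (the interior hinges have degree $4$, the triangle apexes and terminal vertices have degree $2$ or $3$), so each master list has at most three or four entries per column, and I would record exactly which vertices appear, with multiplicity, in which column. Then I would apply the definition of $\textbf{gcds}$: Case~a leaves edges among vertices outside the master list unchanged; Subcase~b.1 complements edges between the two columns; Subcases~b.2/b.3 keep or complement edges within a column according to the parity of the number of master-list occurrences of the pair. Carrying this out, I expect to find that the two triangles adjacent to the chosen edge collapse: the edge $\{u,v\}$ itself becomes isolated (hence $u,v$ are deleted by the $\textbf{gcds}_2$ rule, being non-solitary master-list vertices that become isolated), the remaining vertices of those two triangles get re-wired, and the net effect is to fuse the two triangles flanking $\{u,v\}$ into a single triangle while the rest of the chain is untouched. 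Counting vertices: $\mathcal{G}_n$ has $2n+1$ vertices, we delete exactly two, so $\textbf{gcds}_2(\mathcal{F},\{u,v\})$ has $2n-1 = 2(n-1)+1$ vertices, matching $\mathcal{G}_{n-1}$.

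The final step is to exhibit, in each case, an explicit bijection from the surviving vertex set onto $\{1,\dots,2n-1\}$ and check it carries the new edge set onto $E_{n-1}$; this verifies the isomorphism claim rather than merely the vertex count. For the extreme cases ($n=2$, and edges in the first or last triangle) the check is small and can be done by hand. The main obstacle I anticipate is the bookkeeping in the ``middle'' cases, where the chosen edge sits inside the chain: there one must track how the hinge vertices on either side interact through Subcase~b.1 (they lie in opposite columns) versus the apex vertices (which may share a column and trigger a parity computation), and confirm that after complementation the two old hinges and the two old apexes reorganize into exactly one triangle with the correct two ``dangling'' connections to the neighboring triangles in the chain. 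I would handle this by drawing the local picture (three or four consecutive triangles), computing the master list, and checking edge-by-edge that non-local edges are preserved (Case~a) while the local rewiring produces $\mathcal{G}_{n-1}$'s pattern; the symmetry of $\mathcal{G}_n$ then reduces all middle edges to this one computation.
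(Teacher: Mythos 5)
Your overall plan coincides with the paper's: transport along the isomorphism to reduce to $\mathcal{G}_n$ itself, classify edges of the triangle chain into a few representative types, and for each type write down the master list and apply the \textbf{gcds} rules. The apex--hinge cases and the first/last-triangle cases do come out as you predict (two consecutive triangles fuse into one and the rest of the chain is untouched). The gap is in the hinge--hinge case, which you correctly single out as the delicate one but whose outcome you assert rather than compute --- and the assertion fails. Take $\mathcal{G}_3$ and the edge $\{3,5\}$. The master list has first column $\{1,2,4\}$ (neighbors of $3$ other than $5$) and second column $\{4,6,7\}$ (neighbors of $5$ other than $3$). Each of $\{1,6\},\{1,7\},\{2,6\},\{2,7\}$ is a pair of vertices lying in opposite columns only, so Subcase b.1 complements these non-edges into edges; each of $\{1,4\},\{2,4\},\{4,6\},\{4,7\}$ falls under Subcase b.3 (a shared column, odd total number of occurrences) and is likewise complemented into an edge; and $\{1,2\}$, $\{6,7\}$ persist under Subcase b.2. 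The surviving graph on $\{1,2,4,6,7\}$ is therefore $K_5$, not $\mathcal{G}_2$: the complementation rule joins every left-column-only vertex to every right-column-only vertex, so the local picture does not ``reorganize into exactly one triangle'' and the chain structure is destroyed.

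You should be aware that this is not a slip you could have avoided by more careful bookkeeping within the stated plan: the same four cross-column edges are absent from the paper's own figure for this case, and the discrepancy is not an artifact of the graph-level definition. Indeed, $\mathcal{G}_3$ is the overlap graph of $\alpha_8=[5,7,6,3,2,4,8,1]$, with the two hinges corresponding to the pointers $(1,2)$ and $(7,8)$; applying $\textbf{cds}$ to that interlocking pair produces $[5,7,8,1,2,4,6,3]$, in which the five live pointers pairwise interlock, so the resulting overlap graph genuinely is $K_5$. Hence the deferred ``middle case'' computation in your proposal cannot be completed as described: the conclusion of the theorem holds for edges incident to a degree-two vertex (an apex or an end of the chain), but fails for an edge joining two interior hinges, and a correct treatment must either restrict the statement to the former edges or revise the claimed effect of $\textbf{gcds}_2$ on hinge--hinge edges.
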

\begin{proof} The proof consists of examining the result of $\textbf{gcds}_2(\mathcal{G}_n,\{j,\; k\})$ where $\{j,\; k\}$ is any of the three edges of the dashed triangle displayed in the generic segment of the graph $\mathcal{G}_n$ in Figure \ref{fig:genericblue}, and also cases where the edge $\{j,\; k\}$ is from the leading triangle or the trailing triangle of $\mathcal{G}_n$.
\begin{figure}[ht]
\centering
\fbox{\begin{tikzpicture}[scale = .6]

    \coordinate (v16) at (-2,1);
    \coordinate (v17) at (-1.5,1);
    \coordinate (v18) at (-1,1);

    \coordinate (v1) at (0,0);
    \coordinate (v2) at (1.5,0);
    \coordinate (v3) at (1.5,2);
    \coordinate (v4) at (3,0);
    \coordinate (v5) at (3,2);
    \coordinate (v6) at (4.5,0);
    \coordinate (v7) at (4.5,2);
    \coordinate (v8) at (9,0);
    \coordinate (v9) at (9,2);
    \coordinate (v10) at (7.5,0);
    \coordinate (v11) at (6,0);
    \coordinate (v12) at (6,2);
    \coordinate (v13) at (6.5,1);
    \coordinate (v14) at (7,1);
    \coordinate (v15) at (7.5,1);
    
    \node at (v1) [below]{\small 2i+1};
    \node at (v2) [below]{\small 2i+3};
    \node at (v3) [above]{\small 2i+2}; 
    \node at (v4) [below]{\small 2i+5};
    \node at (v5)  [above]{\small 2i+4};
    \node at (v6) [below]{\small 2i+7};
    \node at (v7)  [above]{\small 2i+6};
    \node at (v8) [below]{\small 2n+1};
    \node at (v9)  [above]{\small 2n};
    \node at (v10) [below]{\small 2n-1};
    \node at (v11) [below]{\small 2i+9};
    \node at (v12)  [above]{\small 2i+8};
    \node [draw,circle, inner sep=1pt, fill] at (v13){};
    \node[ draw,circle, inner sep=1pt, fill] at (v14){};
    \node[draw,circle, inner sep=1pt, fill] at (v15){};
    
  \node[draw,circle, inner sep=1pt, fill] at (v16){};
    \node[draw,circle, inner sep=1pt, fill] at (v17){};
    \node[draw,circle, inner sep=1pt, fill] at (v18){};
  
    \draw [black, thick] (v1) -- (v2);
    \draw  [black, thick]  (v1) -- (v3);
    \draw  [black, thick]  (v3) -- (v2);
     \draw [black, thick] (v2) -- (v4);
    \draw  [black, thick]  (v2) -- (v5);
    \draw  [black, thick]  (v5) -- (v4) ;
     \draw [black, thick, dashed] (v4) -- (v6);
    \draw  [black, thick, dashed]  (v4) -- (v7);
    \draw  [black, thick, dashed]  (v6) -- (v7) ;
    \draw  [black, thick]  (v8) -- (v9);
    \draw  [black, thick]  (v10) -- (v9);
    \draw  [black, thick]  (v8) -- (v10);
    \draw  [black, thick]  (v11) -- (v12);
    \draw  [black, thick]  (v11) -- (v6);
    \draw  [black, thick]  (v12) -- (v6);
   
\end{tikzpicture}}
\caption{}\label{fig:genericblue}
\end{figure}
\vspace{0.1in}

Figures \ref{fig:GenericSegment56}, \ref{fig:GenericSegment57} and \ref{fig:GenericSegment67} each illustrates the resulting graph when an edge of the triangle marked in dashed lines is targeted.  We leave examination of the remaining cases to the reader.
\end{proof}

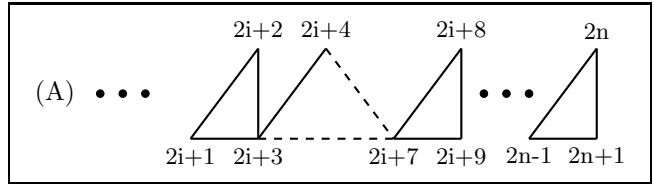
\begin{figure}[ht]
\fbox{
 \centering
 \begin{tikzpicture}[scale = .6]
    \coordinate (v19) at (-3,1);
    \coordinate (v16) at (-2,1);
    \coordinate (v17) at (-1.5,1);
    \coordinate (v18) at (-1,1);
    \coordinate (v1) at (0,0);
    \coordinate (v2) at (1.5,0);
    \coordinate (v3) at (1.5,2);
    \coordinate (v4) at (3,0);
    \coordinate (v5) at (3,2);
    \coordinate (v6) at (4.5,0);
    \coordinate (v7) at (4.5,2);
    \coordinate (v8) at (9,0);
    \coordinate (v9) at (9,2);
    \coordinate (v10) at (7.5,0);
    \coordinate (v11) at (6,0);
    \coordinate (v12) at (6,2);
    \coordinate (v13) at (6.5,1);
    \coordinate (v14) at (7,1);
    \coordinate (v15) at (7.5,1);
    \node at (v1) [below]{\small 2i+1};
    \node at (v2) [below]{\small 2i+3};
    \node at (v3) [above]{\small 2i+2}; 
    \node at (v5)  [above]{\small 2i+4};
    \node at (v6) [below]{\small 2i+7};
    \node at (v8) [below]{\small 2n+1};
    \node at (v9)  [above]{\small 2n};
    \node at (v10) [below]{\small 2n-1};
    \node at (v11) [below]{\small 2i+9};
    \node at (v12)  [above]{\small 2i+8};
    \node [draw,circle, inner sep=1pt, fill] at (v13){};
    \node[ draw,circle, inner sep=1pt, fill] at (v14){};
    \node[draw,circle, inner sep=1pt, fill] at (v15){};    
   \node[draw,circle, inner sep=1pt, fill] at (v16){};
    \node[draw,circle, inner sep=1pt, fill] at (v17){};
    \node[draw,circle, inner sep=1pt, fill] at (v18){};  
   \node at (v19){(A)};
    \draw [black, thick] (v1) -- (v2);
    \draw  [black, thick]  (v1) -- (v3);
    \draw  [black, thick]  (v3) -- (v2);
    \draw  [black, thick]  (v2) -- (v5);
     \draw [black, thick, dashed] (v2) -- (v6);
    \draw  [black, thick, dashed]  (v5) -- (v6);
    \draw  [black, thick]  (v8) -- (v9);
    \draw  [black, thick]  (v10) -- (v9);
    \draw  [black, thick]  (v8) -- (v10);
    \draw  [black, thick]  (v11) -- (v12);
    \draw  [black, thick]  (v11) -- (v6);
    \draw  [black, thick]  (v12) -- (v6);
\end{tikzpicture} }
\caption{$\textbf{gcds}_2(\mathcal{G}_n,\{2i+5,\; 2i+6\})$.  Dashed lines denote new edges.} \label{fig:GenericSegment56} 
\end{figure}
\vspace{0.1in}

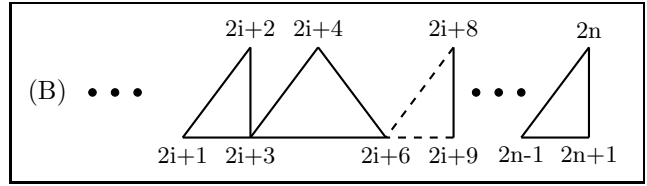
\begin{figure}[ht]
\fbox{\centering
\begin{tikzpicture}[scale = .6]
    \coordinate (v19) at (-3,1);
    \coordinate (v16) at (-2,1);
    \coordinate (v17) at (-1.5,1);
    \coordinate (v18) at (-1,1);
    \coordinate (v1) at (0,0);
    \coordinate (v2) at (1.5,0);
    \coordinate (v3) at (1.5,2);
    \coordinate (v4) at (3,0);
    \coordinate (v5) at (3,2);
    \coordinate (v6) at (4.5,0);
    \coordinate (v7) at (4.5,0); 
    \coordinate (v8) at (9,0);
    \coordinate (v9) at (9,2);
    \coordinate (v10) at (7.5,0);
    \coordinate (v11) at (6,0);
    \coordinate (v12) at (6,2);
    \coordinate (v13) at (6.5,1);
    \coordinate (v14) at (7,1);
    \coordinate (v15) at (7.5,1);
    \node at (v1) [below]{\small 2i+1};
    \node at (v2) [below]{\small 2i+3};
    \node at (v3) [above]{\small 2i+2}; 
    \node at (v5)  [above]{\small 2i+4};
    \node at (v7)  [below]{\small 2i+6}; 
    \node at (v8) [below]{\small 2n+1};
    \node at (v9)  [above]{\small 2n};
    \node at (v10) [below]{\small 2n-1};
    \node at (v11) [below]{\small 2i+9};
    \node at (v12)  [above]{\small 2i+8};
    \node [draw,circle, inner sep=1pt, fill] at (v13){};
    \node[ draw,circle, inner sep=1pt, fill] at (v14){};
    \node[draw,circle, inner sep=1pt, fill] at (v15){};    
    \node[draw,circle, inner sep=1pt, fill] at (v16){};
    \node[draw,circle, inner sep=1pt, fill] at (v17){};
    \node[draw,circle, inner sep=1pt, fill] at (v18){};
\node at (v19){(B)};
    \draw [black, thick] (v1) -- (v2);
    \draw  [black, thick]  (v1) -- (v3);
    \draw  [black, thick]  (v3) -- (v2);
     \draw [black, thick] (v2) -- (v7);
    \draw  [black, thick]  (v2) -- (v5);
    \draw  [black, thick]  (v5) -- (v7) ;
    \draw  [black, thick, dashed]  (v12) -- (v7);
    \draw  [black, thick, dashed]  (v11) -- (v7) ;
    \draw  [black, thick]  (v8) -- (v9);
    \draw  [black, thick]  (v10) -- (v9);
    \draw  [black, thick]  (v8) -- (v10);
    \draw  [black, thick]  (v11) -- (v12);
\end{tikzpicture} }
\caption{$\textbf{gcds}_2(\mathcal{G}_n, \{2i+5,\; 2i+7\})$.  Dashed lines denote new edges. Vertex $2i+6$ is moved down for clarity.}\label{fig:GenericSegment57} 

\vspace{0.1in}
\end{figure}

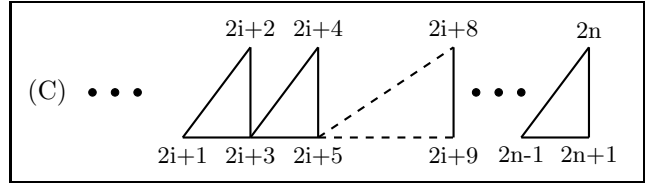
\begin{figure}[ht]
\fbox{\centering
\begin{tikzpicture}[scale = .6]
    \coordinate (v19) at (-3,1);
    \coordinate (v16) at (-2,1);
    \coordinate (v17) at (-1.5,1);
    \coordinate (v18) at (-1,1);
    \coordinate (v1) at (0,0);
    \coordinate (v2) at (1.5,0);
    \coordinate (v3) at (1.5,2);
    \coordinate (v4) at (3,0);
    \coordinate (v5) at (3,2);
    \coordinate (v6) at (4.5,0);
    \coordinate (v7) at (4.5,2);
    \coordinate (v8) at (9,0);
    \coordinate (v9) at (9,2);
    \coordinate (v10) at (7.5,0);
    \coordinate (v11) at (6,0);
    \coordinate (v12) at (6,2);
    \coordinate (v13) at (6.5,1);
    \coordinate (v14) at (7,1);
    \coordinate (v15) at (7.5,1);
    \node at (v1) [below]{\small 2i+1};
    \node at (v2) [below]{\small 2i+3};
    \node at (v3) [above]{\small 2i+2}; 
    \node at (v4) [below]{\small 2i+5};
    \node at (v5)  [above]{\small 2i+4};
    \node at (v8) [below]{\small 2n+1};
    \node at (v9)  [above]{\small 2n};
    \node at (v10) [below]{\small 2n-1};
    \node at (v11) [below]{\small 2i+9};
    \node at (v12)  [above]{\small 2i+8};
    \node [draw,circle, inner sep=1pt, fill] at (v13){};
    \node[ draw,circle, inner sep=1pt, fill] at (v14){};
    \node[draw,circle, inner sep=1pt, fill] at (v15){};   
   \node[draw,circle, inner sep=1pt, fill] at (v16){};
    \node[draw,circle, inner sep=1pt, fill] at (v17){};
    \node[draw,circle, inner sep=1pt, fill] at (v18){};  
\node at (v19){(C)};
    \draw [black, thick] (v1) -- (v2);
    \draw  [black, thick]  (v1) -- (v3);
    \draw  [black, thick]  (v3) -- (v2);
     \draw [black, thick] (v2) -- (v4);
    \draw  [black, thick]  (v2) -- (v5);
    \draw  [black, thick]  (v5) -- (v4) ;
     \draw [black, thick, dashed] (v4) -- (v12);
    \draw  [black, thick, dashed]  (v4) -- (v11);
    \draw  [black, thick]  (v8) -- (v9);
    \draw  [black, thick]  (v10) -- (v9);
    \draw  [black, thick]  (v8) -- (v10);
    \draw  [black, thick]  (v11) -- (v12);
\end{tikzpicture} }
\caption{$\textbf{gcds}_2(\mathcal{G}_n\{2i+6,\; 2i+7\})$.  Dashed lines denote new edges.}  \label{fig:GenericSegment67}
\end{figure}

For a finite graph $\mathcal{G} = (V,\;E)$ and a set $\textsf{A}$ the pair $(\mathcal{G}, \textsf{A})$ is said to be a \emph{position}. Two positions $(\mathcal{F}_1,\textsf{B}_1)$ and $(\mathcal{F}_2,\textsf{B}_2)$ are \emph{isomorphic} if there is an isomorphism $f$ from graph $\mathcal{F}_1$ to graph $\mathcal{F}_2$ such that $\textsf{B}_2 = \{f(b):\; b\in \textsf{B}_1\}$.
A position $(\mathcal{G},\textsf{A})$ is said to be a \textsf{P}-\emph{position} if there is a winning strategy from this position for the player whose move produced this position. It is said to be a \textsf{N}-\emph{position} if the next player to move has a winning strategy from this position.  

For each $n$ define the set $\textsf{A}_n = \{2\}\bigcup \{ k \le 2n+1: k \mod 4 = 0 \}$, a subset of the set of vertices of $\mathcal{G}_n$. 
Figure \ref{fig:specialgraphgeneric} depicts a generic example of a position $(\mathcal{G}_n,\textsf{A}_n)$, with members of $\textsf{A}_n$ marked in gray. From now on, we consider the class $\mathcal{T}$ of positions $(\mathcal{F},\textsf{B})$ for which there exists an $n$ such that $(\mathcal{F},\textsf{B})$ is isomorphic to $(\mathcal{G}_n,\textsf{A}_n)$.

 \begin{figure}[ht]
\centering
\begin{tikzpicture}[scale = .6]

    \coordinate (v1) at (0,0);
    \coordinate (v2) at (1.5,0);
    \coordinate (v3) at (1.5,2);
    \coordinate (v4) at (3,0);
    \coordinate (v5) at (3,2);
    \coordinate (v6) at (4.5,0);
    \coordinate (v7) at (4.5,2);
    \coordinate (v11) at (6,0);
    \coordinate (v12) at (6,2);

    \coordinate (v13) at (8.0,1);
    \coordinate (v14) at (8.5,1);
    \coordinate (v15) at (9.0,1);

    \coordinate (v10) at (9.0,0);
    \coordinate (v8) at (10.5,0);
    \coordinate (v9) at (10.5,2);
    
    \coordinate (v16) at (7.5,0);
    \coordinate (v17) at (7.5,2);

    \node at (v1) [below]{1};
    \node at (v2) [below]{3};
    \node[draw,circle, inner sep=2pt, fill = gray!25] at (v3) [above]{2};
    \node at (v4) [below]{5};
    \node[draw,circle, inner sep=2pt, fill = gray!25] at (v5)  [above]{4};
    \node at (v6) [below]{7};
    \node at (v7)  [above]{6};
    \node at (v8) [below]{\small 2n+3};
    \node at (v9)  [above]{\small 2n+2};
    \node at (v10) [below]{\small 2n+1};
    \node at (v11) [below]{9};
    \node[draw,circle, inner sep=2pt, fill = gray!25] at (v12)  [above]{8};

    \node[draw,circle, inner sep=1pt, fill] at (v13){};
    \node[draw,circle, inner sep=1pt, fill] at (v14){};
    \node[draw,circle, inner sep=1pt, fill] at (v15){};
    
    \node at (v17)  [above]{10};
    \node at (v16) [below]{11};

    \draw [black, thick] (v1) -- (v2);
    \draw  [black, thick]  (v1) -- (v3);
    \draw  [black, thick]  (v3) -- (v2);
     \draw [black, thick] (v2) -- (v4);
    \draw  [black, thick]  (v2) -- (v5);
    \draw  [black, thick]  (v5) -- (v4) ;
     \draw [black, thick] (v4) -- (v6);
    \draw  [black, thick]  (v4) -- (v7);
    \draw  [black, thick]  (v6) -- (v7) ;
    \draw  [black, thick]  (v8) -- (v9);
    \draw  [black, thick]  (v10) -- (v9);
    \draw  [black, thick]  (v8) -- (v10);
    \draw  [black, thick]  (v11) -- (v12);
    \draw  [black, thick]  (v11) -- (v6);
    \draw  [black, thick]  (v12) -- (v6);
    
    \draw [black, thick] (v16) -- (v17);
    \draw [black, thick] (v11) -- (v16);
    \draw [black, thick] (v11) -- (v17);

\end{tikzpicture}
\caption{Position $(\mathcal{G}_n,\textsf{A}_n)$. Vertex $2n+2$ is gray if $n=0$ or $2n+2 \mod 4 = 0$}\label{fig:specialgraphgeneric}
\end{figure}
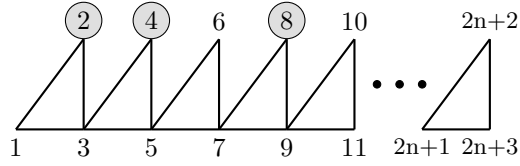

 We seek to solve the decision problem 
\vspace{0.1in}

\begin{center}
\parbox{0.8\textwidth}{
{\flushleft{\bf GRAPH GAME $\textsf{GCDS}(\mathcal{T})$:}}\\
{\flushleft{INSTANCE:}} A position $(\mathcal{F},\textsf{B})$ that is a member of $\mathcal{T}$.
{\flushleft{QUESTION:}} Does ONE have a winning strategy in the game $\textsf{GCDS}(\mathcal{F},\textsf{B})$?\\
  }
\end{center}
\vspace{0.1in}

\begin{theorem}\label{NPGraphs}
Let $(\mathcal{F},\textsf{B})$ be a position which, for some positive integer $n$, is isomorphic to $(\mathcal{G}_n,\textsf{A}_n)$.
\begin{enumerate}
\item{ If $n$ is odd then $(\mathcal{F},\textsf{B})$  is an \emph{\textsf{N}}-position}
\item{If $n$ is even then $(\mathcal{F},\textsf{B})$  is a \emph{\textsf{P}}-position}
\end{enumerate}
\end{theorem}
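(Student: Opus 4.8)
The plan is to recognize $\mathcal{G}_n$ as a chain of $n$ triangles and thereby reduce the game to a transparent game on bit strings. Write the vertices of $\mathcal{G}_n$ as triangles $T_1,\dots ,T_n$, where consecutive $T_j,T_{j+1}$ share a single vertex (a \emph{connector}), each $T_j$ has a private \emph{tip} $t_j$, and the two ends of the chain are \emph{leaves}; one checks directly from the definitions that $\textsf{A}_n$ is exactly the set of tips $t_j$ with $j=1$ or $j$ even, so $\textsf{A}_n$ contains no connector or leaf. Encode $(\mathcal{G}_n,\textsf{A}_n)$ by the string $\beta\in\{0,1\}^n$ with $\beta_j=1$ iff $t_j\in\textsf{A}_n$. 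By Theorem~\ref{gcdsonclass} each move replaces the current $\mathcal{G}_k$ by a copy of $\mathcal{G}_{k-1}$; inspecting the cases in the proof of that theorem (the three edge types of an interior triangle, together with the leading and trailing triangles) shows that a move on an edge of $T_j$ deletes $T_j$ from the chain (re-wiring $T_{j-1}$ to $T_{j+1}$), that surviving vertices keep their $\textsf{A}_n$-membership, and that the favorable string becomes $\beta$ with coordinate $j$ deleted --- with the single exception that a move on the ``base'' edge of $T_j$ (the edge joining its two connector-or-leaf vertices) instead promotes $t_j$ to a connector of the new chain. When the graph has shrunk to a single triangle $\{c_0,t_1,c_1\}$, the next player makes the last move and is left with exactly the vertex opposite the edge it chose, so it may leave whichever of the three vertices it wishes.

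Granting for the moment that the base-edge exception never helps either player, the game from $(\mathcal{G}_k,\beta)$ becomes: the players alternately delete a coordinate of $\beta$, and when one coordinate remains the mover leaves either the tip (favorable iff that bit is $1$) or a connector (never favorable). Let $W_1(\beta)$ and $W_2(\beta)$ denote, respectively, that ONE wins when it is ONE's turn and when it is TWO's turn. For a one-bit string $W_1(\beta)$ holds iff the bit equals $1$, while $W_2(\beta)$ fails; and in general $W_1(\beta)=\bigvee_j W_2(\beta^{(j)})$ and $W_2(\beta)=\bigwedge_j W_1(\beta^{(j)})$, where $\beta^{(j)}$ is $\beta$ with coordinate $j$ removed. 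A short induction on the length $k$ of $\beta$ then shows that $W_1$ fails whenever $k$ is even, $W_2$ fails whenever $k$ is odd, and otherwise
\[
  W_1(\beta)=\big[\,\#\{i:\beta_i=1\}>\#\{i:\beta_i=0\}\,\big]\ \ (k\text{ odd}),\qquad W_2(\beta)=\big[\,\#\{i:\beta_i=1\}-\#\{i:\beta_i=0\}\ge 2\,\big]\ \ (k\text{ even}).
\]
(The parity alternation is immediate from the recursion; for the odd case one notes that a single deletion from an even string can leave strictly more ones than zeros exactly when the even string already has at least two more ones than zeros, and dually for the even case.) For the string coming from $\textsf{A}_n$ the difference $\#\text{ones}-\#\text{zeros}$ equals $1$ if $n$ is odd and $2$ if $n$ is even; since ONE moves first in $\textsf{GCDS}(\mathcal{G}_n,\textsf{A}_n)$, part~(1) is exactly $W_1=\text{true}$ for $n$ odd and part~(2) is $W_1=\text{false}$ for $n$ even. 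The base case $n=1$ is immediate: ONE plays the edge joining the two non-favorable vertices of the triangle and is left with $2\in\textsf{A}_1$, so $(\mathcal{G}_1,\textsf{A}_1)$ is an \textsf{N}-position.

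The step I expect to be the main obstacle is justifying that the base-edge exception never helps. Such a move removes the bit $\beta_j$ but, when $t_j$ is favorable, leaves behind a favorable connector (or, for an end triangle, a favorable leaf), a state outside the clean bit model that could in principle let ONE drive the game toward a final triangle all of whose vertices lie in $\textsf{A}_n$. The plan is to absorb this into a strengthened induction: whenever the opponent creates such a favorable connector $c$, the player responds immediately with a $\{$connector, tip$\}$ move on a triangle incident with $c$, deleting $c$ together with one more triangle; one then verifies that this restores the invariant ``no favorable connector,'' lowers the triangle count by two over the round, and passes to a string to which the induction hypothesis applies with the same parity accounting as above. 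Carrying out this move-case analysis and the connector-neutralization argument in full --- rather than any single clever idea --- is where the real work lies; the bit game itself is elementary.
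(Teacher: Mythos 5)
Your route is genuinely different from the paper's. The paper proves the theorem by induction on $n$ with a pairing strategy: for the \textsf{P}-player it exhibits, for every move of the opponent, a reply inside a two-triangle ``Buddy Block'' so that each full round removes exactly one vertex of $\textsf{A}_{n+1}$ and three others and returns the position to an exact copy of $(\mathcal{G}_{n-1},\textsf{A}_{n-1})$; for the \textsf{N}-player it plays the trailing triangle to reach $(\mathcal{G}_{n-1},\textsf{A}_{n-1})$ directly. You instead abstract the play to a coordinate-deletion game on a bit string and solve that game for \emph{arbitrary} strings, which is more informative (it isolates the threshold $\#1-\#0$, and it explains why the $\textsf{A}_n$-string sits exactly at the boundary: difference $1$ for $n$ odd, $2$ for $n$ even) and would generalize to other favorable sets. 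Your identification of $\textsf{A}_n$ with the tips $t_j$, $j=1$ or $j$ even, your classification of the three edge types per triangle, and the recursive solution of the clean bit game are all correct and reproduce the paper's answer.

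The gap is the one you name, but your plan for it needs two repairs. First, the ``base-edge exception never helps either player'' framing is not the right invariant: a base-edge move on a favorable triangle is strictly in ONE's interest (it trades a $1$-bit for a persistent favorable connector), so the burden is asymmetric. When TWO is defending (the $W_1$-fails cases), your neutralization response is the right idea, but you must also cover the round in which ONE promotes a favorable tip on the very move that produces the final triangle --- there TWO cannot neutralize, and the argument must instead observe that a single un-neutralized promotion still leaves a non-favorable vertex in the last triangle for TWO to keep. When ONE is the resolver (the $W_2$-even case), TWO's base-edge moves are not neutralized at all; what is needed is a monotonicity lemma (enlarging the favorable set cannot hurt ONE, since moves do not depend on $\textsf{A}$ and the winning condition is monotone in $\textsf{A}$), which is easy but is a different mechanism from the invariant you propose and should be stated. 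Second, your entire reduction --- that a connector--tip move deletes exactly the tip and that connector while preserving every other vertex's role, and that the base move promotes the tip --- rests on the case analysis of Theorem \ref{gcdsonclass}, whose interior cases are only exhibited in figures and whose leading- and trailing-triangle cases the paper explicitly leaves to the reader; your proof inherits that unverified case analysis and must carry it out, including the end triangles where ``leaf'' and ``tip'' are graph-theoretically symmetric and are distinguished only by membership in $\textsf{A}_n$. With those two points supplied the argument closes; as written it is a correct and attractive plan rather than a complete proof.
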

\begin{proof}
We prove the theorem by induction on $n$.
Observe that for $(\mathcal{G}_1,\textsf{A}_1)$ the graph $\mathcal{G}_1$is a triangle, and one vertex assigned to ONE, and two assigned to TWO.  $(\mathcal{G}_1,\textsf{A}_1)$ is an $N-position$: The next player to move can remove all of the opponent's vertices. 
Note that this is also true in the case of two vertices assigned to ONE and one vertex  assigned to TWO. 

 Now consider $(\mathcal{G}_2,\textsf{A}_2)$, depicted in Figure \ref{fig:G2isP}. As an exhaustive verification shows, application of $\textbf{gcds}_2$ based on any edge of $(\mathcal{G}_2,\textsf{A}_2)$ results in a position isomorphic to an \textsf{N}-position. Thus any position isomorphic to $(\mathcal{G}_2,\textsf{A}_2)$ is a \textsf{P}-position.

\begin{figure}[ht]
\centering
\begin{tikzpicture}[scale = .5]

    \coordinate (v1) at (0,0);
    \coordinate (v2) at (1.5,0);
    \coordinate (v3) at (1.5,2);
    \coordinate (v4) at (3,0);
    \coordinate (v5) at (3,2);
    
    \node at (v1) [below]{1};
    \node at (v2) [below]{3};
    \node[draw,circle, inner sep=2pt, fill = gray!25] at (v3) [above]{2};
    \node at (v4) [below]{5};
    \node[draw,circle, inner sep=2pt, fill = gray!25] at (v5)  [above]{4};
    
    \draw [black, thick] (v1) -- (v2);
    \draw  [black, thick]  (v1) -- (v3);
    \draw  [black, thick]  (v3) -- (v2);
    \draw [black, thick] (v2) -- (v4);
    \draw  [black, thick]  (v2) -- (v5);
    \draw  [black, thick]  (v4) -- (v5);

\end{tikzpicture}
\caption{The position $(\mathcal{G}_2,\textsf{A}_2)$ is a \textsf{P}-position}\label{fig:G2isP}
\end{figure}
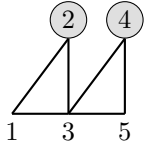

Now let $n\ge 2$ be an integer and assume that for all $k\le n$ it has been verified that if $k$ is even, then $(\mathcal{G}_k,\textsf{A}_k)$ is a \textsf{P}-position, while if $k$ is odd, then $(\mathcal{G}_k,\textsf{A}_k)$ is a \textsf{N}-position.

{\flushleft{Case 1:} $n$ is even.} Then $n+1$ is odd. By Theorem \ref{gcdsonclass} $\textbf{gcds}_2(\mathcal{G}_{n+1},\{2(n+1),\; 2(n+1)+1\} )$ produces from the position $(\mathcal{G}_{n+1},\textsf{A}_{n+1})$ a position isomorphic to $(\mathcal{G}_n,\textsf{A}_n)$, which by the Induction Hypothesis is a \textsf{P}-position. It follows that $(\mathcal{G}_{n+1},\textsf{A}_{n+1})$ is an \textsf{N}-position.

{\flushleft{Case 2:} $n$ is odd.} Then $n+1$ is even, and we must show that $(\mathcal{G}_{n+1},\textsf{A}_{n+1})$ is a \textsf{P}-position. We do this by showing that for each application of $\textbf{gcds}_2$ to $\mathcal{G}_{n+1}$, there is a follow-up  application of $\textbf{gcds}_2$ such that the resulting position is isomorphic to $(\mathcal{G}_{n-1},\textsf{A}_{n-1})$ which, by the induction hypothesis, is a \textsf{P}-position. Thus each application of $\textbf{gcds}_2$ to $\mathcal{G}_{n+1}$ results in a \textsf{N}-position.

This argument involves starting with $(\mathcal{G}_{n+1},\textsf{A}_{n+1})$, considered as the current position after the previous player has moved. We then consider a move by the current player, followed by the previous player again.  We consider two cases:\\
{\flushleft{Case 2.A:}} The current player applies $\textbf{gcds}_2$ to an edge \emph{not} belonging to $\{\{1,\; 2\},\; \{1,\; 3\},\; \{2,\; 3\}\}$. 

In this event we zoom in on a pair of adjacent triangles, called a ``Buddy Block" and marked in dashed edges in Figure \ref{fig:BuddyBlock}, that includes the one that contains the edge targeted by the current player's $\textbf{gcds}_2$ move. 

\begin{figure}[ht]
 \centering
 \begin{tikzpicture}[scale = .7]
    \coordinate (v16) at (-2,1);
    \coordinate (v17) at (-1.5,1);
    \coordinate (v18) at (-1,1);

    \coordinate (v1) at (0,0);
    \coordinate (v2) at (1.5,0);
    \coordinate (v3) at (1.5,2);
    \coordinate (v4) at (3,0);
    \coordinate (v5) at (3,2);
    \coordinate (v6) at (4.5,0);
    \coordinate (v7) at (4.5,2);
    \coordinate (v8) at (9,0);
    \coordinate (v9) at (9,2);
    \coordinate (v10) at (7.5,0);
    \coordinate (v11) at (6,0);
    \coordinate (v12) at (6,2);
    \coordinate (v13) at (6.5,1);
    \coordinate (v14) at (7,1);
    \coordinate (v15) at (7.5,1);

    \node at (v1) [below]{\small 2i+1};
    \node at (v2) [below]{\small 2i+3};
    \node at (v3) [above]{\small 2i+2}; 
    \node at (v4) [below]{\small 2i+5};
    \node at (v5)  [above]{\small 2i+4};
    \node at (v6) [below]{\small 2i+7};
    \node at (v7)  [above]{\small 2i+6};
    \node at (v8) [below]{\small 2n+1};
    \node at (v9)  [above]{\small 2n};
    \node at (v10) [below]{\small 2n-1};
    \node at (v11) [below]{\small 2i+9};
    \node at (v12)  [above]{\small 2i+8};
    \node [draw,circle, inner sep=1pt, fill] at (v13){};
    \node[ draw,circle, inner sep=1pt, fill] at (v14){};
    \node[draw,circle, inner sep=1pt, fill] at (v15){};
    
  \node[draw,circle, inner sep=1pt, fill] at (v16){};
    \node[draw,circle, inner sep=1pt, fill] at (v17){};
    \node[draw,circle, inner sep=1pt, fill] at (v18){};
  
    \draw [black, thick] (v1) -- (v2);
    \draw  [black, thick]  (v1) -- (v3);
    \draw  [black, thick]  (v3) -- (v2);
     \draw [black, thick, densely dotted] (v2) -- (v4);
    \draw  [black, thick, dashed]  (v2) -- (v5);
    \draw  [black, thick, dashed]  (v5) -- (v4) ;
     \draw [black, thick, dashed] (v4) -- (v6);
    \draw  [black, thick, dashed]  (v4) -- (v7);
    \draw  [black, thick, dashed]  (v6) -- (v7) ;
    \draw  [black, thick]  (v8) -- (v9);
    \draw  [black, thick]  (v10) -- (v9);
    \draw  [black, thick]  (v8) -- (v10);
    \draw  [black, thick]  (v11) -- (v12);
    \draw  [black, thick]  (v11) -- (v6);
    \draw  [black, thick]  (v12) -- (v6);
\end{tikzpicture} 
\caption{The configuration in dashes is called a ``Buddy Block"}\label{fig:BuddyBlock}
\end{figure}
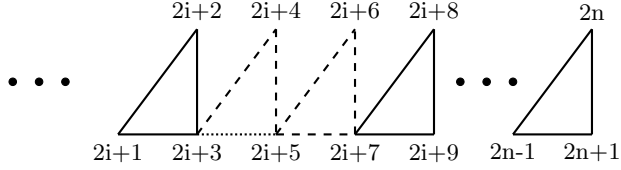

Figures \ref{fig:GenericSegment56}, \ref{fig:GenericSegment57} and \ref{fig:GenericSegment67}  show the three possible outcomes of the current player's move when the move involves edges of the right-hand triangle of the Buddy Block.  Analogous figures arise for moves involving an edge of the left-hand triangle of the Buddy Block. In response to the current player, the previous player also applies a $\textbf{gcds}_2$ move to an edge of the other triangle in the Buddy Block, assuring that one vertex from $\textsf{A}_{n+1}$ and three others are removed.  
See, for example, Figures \ref{fig:Removal1}, \ref{fig:Removal2} and \ref{fig:Removal3}. 

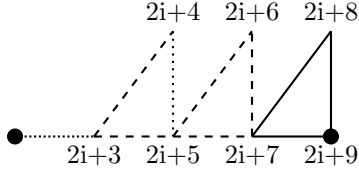
\begin{figure}[h]
\begin{tikzpicture}[scale = .7]

    \coordinate (v0) at (-1,0);
    \coordinate (v1) at (0,0);
    \coordinate (v2) at (1.5,0);
    \coordinate (v3) at (1.5,2);
    \coordinate (v4) at (3,0);
    \coordinate (v5) at (3,2);
    \coordinate (v6) at (4.5,0);
    \coordinate (v7) at (4.5,2);
    \coordinate (v8) at (9,0);
    \coordinate (v9) at (9,2);
    \coordinate (v10) at (6,0);
    \coordinate (v11) at (7.5,0);
    \coordinate (v12) at (7.5,2);
    \coordinate (v13) at (5,1);
    \coordinate (v14) at (5.5,1);
    \coordinate (v15) at (6,0);
       \coordinate (v16) at (6,2);

    \node at (v2) [below]{2i+3};
    \node at (v4) [below]{2i+5};
    \node at (v5)  [above]{2i+4};
    \node at (v6) [below]{2i+7};
    \node at (v7)  [above]{2i+6};
    \node at (v15)  [below]{2i+9};
    \node at (v16)  [above]{2i+8};
    \node[draw,circle, inner sep=2pt, fill] at (v1){};
    \node[draw,circle, inner sep=2pt, fill] at (v15){};
    
    \draw [black, densely dotted, thick] (v1)--(v2);
     \draw [black, thick, dashed] (v2) -- (v4);
    \draw  [black, thick, dashed]  (v2) -- (v5);
    \draw  [black, thick,  dotted]  (v5) -- (v4) ;
     \draw [black, thick, dashed] (v4) -- (v6);
    \draw  [black, thick, dashed]  (v4) -- (v7);
    \draw  [black, thick, dashed]  (v6) -- (v7) ;
    \draw  [black,  thick]  (v15) -- (v6);
  \draw [black, thick] (v6)--(v16);
    \draw [black, thick] (v15)--(v16);
    
\end{tikzpicture}
\caption{Suppose the current player targets the edge $\{2i+4,\; 2i+5\}$.}\label{fig:Removal1}
\end{figure}

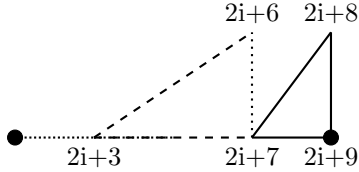
\begin{figure}[ht]
\begin{tikzpicture}[scale = .7]

    \coordinate (v0) at (-1,0);
    \coordinate (v1) at (0,0);
    \coordinate (v2) at (1.5,0);
    \coordinate (v3) at (1.5,2);
    \coordinate (v4) at (3,0);
    \coordinate (v5) at (3,2);
    \coordinate (v6) at (4.5,0);
    \coordinate (v7) at (4.5,2);
    \coordinate (v8) at (9,0);
    \coordinate (v9) at (9,2);
    \coordinate (v10) at (6,0);
    \coordinate (v11) at (7.5,0);
    \coordinate (v12) at (7.5,2);
    \coordinate (v13) at (5,1);
    \coordinate (v14) at (5.5,1);
    \coordinate (v15) at (6,0);
   \coordinate (v16) at (6,2);

    \node at (v2) [below]{2i+3};
    \node at (v6) [below]{2i+7};
    \node at (v7)  [above]{2i+6};
    \node at (v15)  [below]{2i+9};
    \node at (v16)  [above]{2i+8};
    \node[draw,circle, inner sep=2pt, fill] at (v1){};
    \node[draw,circle, inner sep=2pt, fill] at (v15){};
    
     \draw [black, densely dotted, thick] (v1)--(v2);
   \draw [black, densely dotted, thick] (v2)--(v4);
     \draw [black, thick, dashed] (v2) -- (v6);
    \draw  [black, thick, dashed]  (v2) -- (v7);
    \draw  [black, thick, dotted]  (v6) -- (v7) ;
    \draw  [black,  thick]  (v15) -- (v6);
   \draw [black, thick] (v6)--(v16);
    \draw [black, thick] (v15)--(v16);

\end{tikzpicture}
\caption{After $\{2i+4,\; 2i+5\}$ removal, the other player targets the edge $\{2i+6,\; 2i+7\}$.}\label{fig:Removal2}
\end{figure}

\begin{figure}[ht]
\begin{tikzpicture}[scale = .7]
    \coordinate (v0) at (-1,0);
    \coordinate (v1) at (0,0);
    \coordinate (v2) at (1.5,0);
    \coordinate (v3) at (1.5,2);
    \coordinate (v4) at (3,0);
    \coordinate (v5) at (3,2);
    \coordinate (v6) at (4.5,0);
    \coordinate (v7) at (4.5,2);
    \coordinate (v8) at (9,0);
    \coordinate (v9) at (9,2);
    \coordinate (v10) at (6,0);
    \coordinate (v11) at (7.5,0);
    \coordinate (v12) at (7.5,2);
    \coordinate (v13) at (5,1);
    \coordinate (v14) at (5.5,1);
    \coordinate (v15) at (6,0);
   \coordinate (v16) at (6,2);

    \node at (v2) [below]{2i+3};
    \node at (v15) [below]{2i+9};
    \node at (v16) [above]{2i+8};
    \node[draw,circle, inner sep=2pt, fill] at (v15){};
    \node[draw,circle, inner sep=2pt, fill] at (v1){};
    
    \draw [black, dashed, thick] (v2)--(v16);
    \draw [black, dashed, thick] (v2)--(v15);
    \draw [black,  thick] (v15)--(v16);
    \draw [black, densely dotted, thick] (v1)--(v2);
    \draw  [black, dashed, thick]  (v2) -- (v4);
\end{tikzpicture}
\caption{After removal of the edge $\{2i+6,\; 2i+7\}$.}\label{fig:Removal3}
\end{figure}
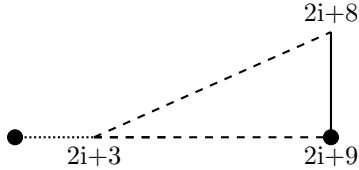

Observe that after these two $\textbf{gcds}_2$ moves the resulting graph is isomorphic to $\mathcal{G}_{n-1}$. Moreover, by considering the cases where $2i+4$ or $2i+6$ is an element of $\textsf{A}_{n+1}$, we see that the resulting position is isomorphic to $(\mathcal{G}_{n-1},\textsf{A}_{n-1})$. A similar analysis shows that for each move by the current player there is a countermove by the previous player so that the result of the two moves is a position isomorphic to $(\mathcal{G}_{n-1},\textsf{A}_{n-1})$. This completes the argument for Case 1.

{\flushleft{Case 2.B:}} The current player applies $\textbf{gcds}_2$ to an edge belonging to $\{\{1,\; 2\},\; \{1,\; 3\},\; \{2,\; 3\}\}$. 

 In this case, we consider the first three triangles and follow the same strategy of removing an edge that results in the loss of one vertex from $\textsf{A}_{n+1}$ and three vertices outside $\textsf{A}_{n+1}$. The resulting position is once again isomorphic to $(\mathcal{G}_{n-1},\textsf{A}_{n-1})$. 

This completes the proof of the theorem.
\end{proof}

Theorem \ref{NPGraphs} provides the following solution to the decision problem GRAPH GAME $\textsf{GCDS}(\mathcal{T})$:
\begin{corollary}\label{specialdecisiongcds} 
For each positive integer $n$ ONE has a winning strategy in the game $(\mathcal{G}_n,\textsf{A}_n)$ if, and only if, $n$ is odd.
\end{corollary}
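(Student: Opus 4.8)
The plan is to read the corollary off directly from Theorem \ref{NPGraphs} once the \textsf{P}/\textsf{N} classification of a position is translated into statements about ONE and TWO. First I would recall that, by definition, a position is an \textsf{N}-position precisely when the player about to move from it has a winning strategy, and a \textsf{P}-position precisely when the player who produced it (equivalently, the player \emph{not} about to move) has a winning strategy. By Theorem \ref{cdsgraph} the game $\textsf{GCDS}(\mathcal{G}_n,\textsf{A}_n)$ is determined, so every position is exactly one of these two types and the two options are mutually exclusive.

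Next I would use the fact that the game starts with ONE making the first move from the initial position $(\mathcal{G}_n,\textsf{A}_n)$. Thus ONE is the ``next player to move'' at the root of the game tree, and consequently ONE has a winning strategy in $\textsf{GCDS}(\mathcal{G}_n,\textsf{A}_n)$ if and only if $(\mathcal{G}_n,\textsf{A}_n)$ is an \textsf{N}-position; otherwise it is a \textsf{P}-position and TWO wins. Since $(\mathcal{G}_n,\textsf{A}_n)$ is isomorphic to itself via the identity map, Theorem \ref{NPGraphs} applies with $\mathcal{F}=\mathcal{G}_n$ and $\textsf{B}=\textsf{A}_n$: the position is an \textsf{N}-position when $n$ is odd and a \textsf{P}-position when $n$ is even. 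Combining the two observations gives that ONE has a winning strategy exactly when $n$ is odd, which is the statement of the corollary.

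There is no real obstacle here; the entire content is carried by Theorem \ref{NPGraphs}, and this corollary is merely its restatement in the language of the decision problem GRAPH GAME $\textsf{GCDS}(\mathcal{T})$. The only point that needs care is the reading of the \textsf{P}-position definition at the root, where there is no ``previous'' move: one must interpret it as saying that the player about to move (namely ONE) has no winning strategy, so that the determinacy guaranteed by Theorem \ref{cdsgraph} forces TWO to have one. With that reading, the biconditional follows immediately.
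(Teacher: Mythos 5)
Your proposal is correct and matches the paper's (implicit) reasoning exactly: the paper derives this corollary directly from Theorem \ref{NPGraphs} by the same translation of the \textsf{N}/\textsf{P} classification into which player wins, given that ONE moves first and the game is determined. Your added care about interpreting the \textsf{P}-position definition at the root is a reasonable clarification of a point the paper leaves unstated.
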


\section{The permutation sorting game $\textsf{CDS}(\alpha,\textsf{A})$}

The game $\textsf{CDS}(\alpha, A)$ was defined in the introduction, where we also mentioned the following result:

\begin{theorem}[\cite{AHMMSW}, Theorem 4.4]\label{cdsboundtheorem13} Let $\alpha$ be a permutation with a strategic pile $\textsf{P}$. For any set $\textsf{A}\subseteq \textsf{P}$ 
\begin{enumerate}
\item{If $\vert \textsf{A} \vert \ge \frac{3 }{4}\vert \textsf{P} \vert$, then ONE has a winning strategy in $\emph{\textsf{CDS}}(\alpha,\textsf{A})$. }
\item{If $\vert \textsf{P}\vert >0$ and $\vert \textsf{A} \vert \le \max\{\frac{1}{4} \vert\textsf{P} \vert - 2,\; 0\}$, then TWO has a winning strategy in $\emph{\textsf{CDS}}(\alpha,\textsf{A})$. }
\end{enumerate}
\end{theorem}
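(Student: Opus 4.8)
The plan is to reduce $\textsf{CDS}(\alpha,\textsf{A})$ to a transparent counting game played on the strategic pile, and then to win that game for the stated player by a greedy strategy together with a move count. First I would extract from Lemma~\ref{strpilelemma} the following picture of how a play evolves. Along any play the strategic pile of the current permutation is a subset of $\textsf{P}$ that can only shrink, losing at most two pointers per \textbf{cds} move (\textbf{cds} Bounded Removal Theorem); it never becomes empty, for otherwise the current permutation would be \textbf{cds}-sortable, contrary to the choice of $\alpha$; while it has more than one pointer the player to move may single out any one of its pointers and play a \textbf{cds} move deleting that pointer (Strategic Pile Removal Theorem); and once it has been reduced to a single pointer $s$ it stays so, with $s$ the code of the eventual fixed point (\textbf{cds} Fixed Point Theorem). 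Consequently a play of $\textsf{CDS}(\alpha,\textsf{A})$ is, as far as the winner is concerned, the same as this abstract game: beginning from $\textsf{P}$, the players alternately (ONE first) delete elements of the current set --- at most two per move, and, whenever more than one element remains, any one chosen element may be among those deleted --- until a single element $s$ survives; ONE wins if and only if $s\in\textsf{A}$. Write $N=\vert\textsf{P}\vert$ and $a=\vert\textsf{A}\vert$, call the members of $\textsf{A}$ \emph{good} and the others \emph{bad}, and call the \emph{shrinking phase} the moves made before the current set first has size $1$; this phase begins with a move of ONE and its moves together remove $N-1$ elements.

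For part (1), assume $a\ge\frac34 N$ and let ONE follow the greedy strategy of deleting a bad element on every turn on which that is possible, i.e.\ on which the current set has more than one element and contains a bad element. Suppose, for contradiction, that ONE loses; then the surviving $s$ is bad, hence present throughout, so every one of ONE's shrinking-phase moves deletes a bad element, and these are distinct elements. Since $s$ itself is bad and never deleted, at most $N-a-1$ bad elements are deleted in all, so ONE makes at most $N-a-1$ moves in the shrinking phase; as ONE moves first, the phase then has at most $2(N-a-1)$ moves, and since each move deletes at most two elements, $N-1\le 4(N-a-1)$, i.e.\ $4a\le 3N-3$, contradicting $a\ge\frac34 N$. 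Hence ONE wins.

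For part (2), if $\textsf{A}=\varnothing$ then the surviving element is never in $\textsf{A}$ and TWO wins at once; otherwise $1\le a\le \frac14 N-2$, and I would have TWO follow the symmetric greedy strategy of deleting a good element whenever possible. If TWO lost, the surviving $s$ would be good and present throughout, so each of TWO's shrinking-phase moves would delete a distinct good element; since at most $a-1$ good elements are deleted in all, TWO makes at most $a-1$ moves in the phase, so (ONE moving first) the phase has at most $2a-1$ moves, whence $N-1\le 2(2a-1)=4a-2$, i.e.\ $a\ge\frac{N+1}{4}$, contradicting $a\le \frac14 N-2$. Hence TWO wins.

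The one point that genuinely needs care is the reduction in the first paragraph --- that the pile is monotone non-increasing, never empties, always offers the mover a deletion of a named pile pointer, and determines the terminal code. These are exactly the \textbf{cds} Sortability, Strategic Pile Removal, \textbf{cds} Bounded Removal, and \textbf{cds} Fixed Point Theorems collected in Lemma~\ref{strpilelemma} and proved in \cite{AHMMSW}; granting them, both parts are the short counts above. I would also note that these counts in fact yield somewhat stronger thresholds than the ones stated, and that squeezing out the optimal bounds --- which requires tracking $N\bmod 4$ and whether the final moves remove one or two pile pointers --- is the end-game analysis carried out in Section~5.
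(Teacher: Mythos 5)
Your proof is correct and follows essentially the same route as the paper's: both have the winning player greedily delete the opponent's strategic-pile elements, justified by exactly the four facts of Lemma \ref{strpilelemma}, and both reduce to a count of how many pile elements can disappear per move. The only differences are presentational --- you tally total moves against the $\vert\textsf{P}\vert-1$ deletions by contradiction where the paper maintains a per-round invariant ($3\vert B_i\vert\le\vert A_i\vert$), and you prove part (2) by a symmetric direct count where the paper reduces it to part (1) after ONE's opening move.
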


For the reader's convenience we outline a proof: 

{\flushleft Towards proving (1):}  We may assume that both \textsf{P} and $\textsf{P}\setminus \textsf{A}$ are nonempty, otherwise there is nothing to prove. Consider a strategy of player ONE that calls in each round for removing an element of $\textsf{P}\setminus \textsf{A}$. Consider a play during which ONE followed this strategy. For nonnegative integers $i$ let $P_i$ denote the elements of $\textsf{P}$ not yet removed by the end of round $i$, let $A_i$ denote the set $\textsf{A}\cap P_i$ and let $B_i$ denote $P_i\setminus A_i$. By convention $P_0 = \textsf{P}$, $A_0 = \textsf{A}$ and $B_0 = \textsf{P}\setminus \textsf{A}$.

In a round ONE's strategy may require also removing an element of \textsf{A} to remove an element of $\textsf{P}\setminus \textsf{A}$. Player TWO may be able to remove two elements of \textsf{A} whenever it is TWO's turn. By the \textbf{cds} Bounded Removal Theorem a player can remove at most two elements from the strategic pile in any round. Thus, per round, ONE may loose up to three elements from \textsf{A} while TWO looses only one element from $\textsf{P}\setminus \textsf{A}$. 

By the $i$-th round we have  
$\vert B_i\vert \le \vert \textsf{P}\setminus\textsf{A}\vert - i  \le \frac{1}{4}\vert \textsf{P}\vert - i 
\mbox{ and } 
   \vert A_i\vert \ge \vert A\vert - 3i \ge \frac{3}{4}\vert \textsf{P}\vert - 3i$, so that the inequality $3\vert B_i\vert \le \vert A_i\vert$ holds. By the Strategic Pile Removal Theorem a player may in any round remove any chosen element from the strategic pile, provided there is more than one element in the strategic pile. Thus, for any $i$ such that $\vert B_i\vert > 0$, applying ONE's strategy gives $\vert B_{i+1}\vert <\vert B_i\vert$. Since the game started with a nonempty set $B_0$ there is a last $i\ge 0$ with $B_i$ nonempty. For this $i$ we have $\vert P_i\vert \ge 4 \vert B_i\vert$, so that $P_i$ has at least four elements. In round $i+1$ ONE's strategy removes all elements of $B_i$. By the \textbf{cds} Bounded Removal Theorem, there are elements of $\textsf{A}$ left after this move by ONE. Since $\alpha$ is not \textbf{cds} sortable, ONE wins the play.
{\flushleft Towards proving (2):} 
Note that if $\vert\textsf{A} \vert\le \frac{ \vert \textsf{P} \vert}{4}-2$ then $\vert \textsf{P}\setminus \textsf{A}\vert \ge \frac{3}{4}\vert\textsf{P}\vert + 2$. 
ONE moves first, potentially removing two items belonging to TWO from the strategic pile. After this first move the new strategic pile $\textsf{P}^{\prime}$ has two fewer elements than \textsf{P}, and $\textsf{P}^{\prime}\setminus \textsf{A}$ has two fewer elements than $\textsf{P}\setminus\textsf{A}$. It follows that $\vert \textsf{P}^{\prime}\setminus \textsf{A}\vert \ge \frac{3}{4} \vert\textsf{P}^{\prime}\vert$ and from this point on TWO is the first player. Apply (1) to conclude that TWO has a winning strategy.  

A more detailed analysis of the end-game shows: 
\begin{theorem}\label{endgame} Let $\alpha$ be a permutation with strategic pile \textsf{P}.
\begin{enumerate}
\item{
\begin{enumerate}
  \item{If $\vert \textsf{P}\vert \mod 4 = j$ and $j<2$ then for any $\textsf{A}\subseteq \textsf{P}$ with $\vert\textsf{A}\vert\ge \frac{3}{4}(\vert\textsf{P}\vert-j) +j$, ONE has a winning strategy in $\textsf{CDS}(\alpha,\textsf{A})$.}
  \item{If $\vert \textsf{P}\vert \mod 4 = j$ and $j\ge 2$ then for any $\textsf{A}\subseteq \textsf{P}$ with $\vert\textsf{A}\vert\ge \frac{3}{4}(\vert\textsf{P}\vert-j) + (j-1)$, ONE has a winning strategy in $\textsf{CDS}(\alpha,\textsf{A})$.}
\end{enumerate} 
}
\item{If $\vert \textsf{P}\vert \mod 4 = j$ then for any $\textsf{A}\subseteq\textsf{P}$ with $\vert \textsf{P}\setminus \textsf{A}\vert \ge\frac{3}{4}(\vert \textsf{P}-j) + j$, TWO has a winning strategy in $\textsf{CDS}(\alpha,\textsf{A})$
}
\end{enumerate}
\end{theorem}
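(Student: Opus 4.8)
The plan is to strip the residue out of the hypotheses and then prove both parts from a single strategy for ONE plus a hand-off to TWO. Writing $\vert\textsf{P}\vert=4m+j$ with $j=\vert\textsf{P}\vert\bmod 4$, a routine computation shows that the hypothesis of $(1a)$ (when $j<2$) and the hypothesis of $(1b)$ (when $j\ge 2$) are each equivalent to the single inequality $4\vert\textsf{A}\vert\ge 3\vert\textsf{P}\vert-2$, and the hypothesis of $(2)$ is equivalent to $4\vert\textsf{P}\setminus\textsf{A}\vert\ge 3\vert\textsf{P}\vert$. So it suffices to prove: $(1')$ if $4\vert\textsf{A}\vert\ge 3\vert\textsf{P}\vert-2$ then ONE has a winning strategy in $\textsf{CDS}(\alpha,\textsf{A})$; and $(2')$ if $4\vert\textsf{P}\setminus\textsf{A}\vert\ge 3\vert\textsf{P}\vert$ then TWO has a winning strategy.

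To prove $(1')$ I would have ONE play the strategy: while the remaining strategic pile still contains a pointer of $\textsf{P}\setminus\textsf{A}$, delete one such pointer. The Strategic Pile Removal Theorem makes this a legal move as long as at least two pointers remain, and once $\textsf{P}\setminus\textsf{A}$ has been exhausted the surviving fixed-point code necessarily lies in $\textsf{A}$, so ONE has already won. The bookkeeping device is the potential $\Phi=\vert A_{\mathrm{rem}}\vert-3\vert B_{\mathrm{rem}}\vert$, where $A_{\mathrm{rem}},B_{\mathrm{rem}}$ are the still-present parts of $\textsf{A}$ and $\textsf{P}\setminus\textsf{A}$. By the cds Bounded Removal Theorem a move removes at most two pointers from the pile, so a move of ONE's prescribed form changes $\Phi$ by at least $+2$ (the deleted pointer of $\textsf{P}\setminus\textsf{A}$ contributes $+3$; a possible second deleted pointer costs at most $-1$, and contributes $+3$ if it too lies in $\textsf{P}\setminus\textsf{A}$), while any move of TWO changes $\Phi$ by at least $-2$. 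Consequently $\Phi\ge\Phi_0+2$ right after every move of ONE and $\Phi\ge\Phi_0$ right after every move of TWO (and at the start), where $\Phi_0=4\vert\textsf{A}\vert-3\vert\textsf{P}\vert\ge-2$. The play ends when one pointer remains; were that pointer in $\textsf{P}\setminus\textsf{A}$, we would have $\Phi=-3$ at a state which is either post-ONE (so $\Phi\ge 0$) or post-TWO (so $\Phi\ge-2$), a contradiction. Hence ONE wins.

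For $(2')$, ONE must move first. ONE's opening move removes $r\in\{1,2\}$ pointers, at most $r$ of them from $\textsf{P}\setminus\textsf{A}$, so the resulting position $(\alpha',\textsf{P}'\setminus\textsf{A})$ satisfies $4\vert\textsf{P}'\setminus\textsf{A}\vert\ge 4\vert\textsf{P}\setminus\textsf{A}\vert-4r\ge 3\vert\textsf{P}\vert-4r\ge 3(\vert\textsf{P}\vert-r)-2=3\vert\textsf{P}'\vert-2$, where we used $r\le 2$ and $4\vert\textsf{P}\setminus\textsf{A}\vert\ge 3\vert\textsf{P}\vert$. Since $\alpha$ is not cds-sortable, neither is $\alpha'$, so $(\alpha',\textsf{P}'\setminus\textsf{A})$ is a genuine position with TWO on move, and applying $(1')$ to it with $\textsf{P}'\setminus\textsf{A}$ in the role of $\textsf{A}$ produces a winning strategy for TWO. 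The tiny cases $\vert\textsf{P}\vert\le 2$, where ONE's opening move may already end the game or leave an empty pile, are dealt with by direct inspection and again favor TWO under the stated hypothesis.

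The step I expect to be the main obstacle is the boundary instance $j=2$ with $4\vert\textsf{A}\vert=3\vert\textsf{P}\vert-2$, so $\Phi_0=-2$: there the potential argument by itself does not exclude reaching, on ONE's turn, a two-pointer pile $\{a,b\}$ with $a\in\textsf{A}$ and $b\notin\textsf{A}$, a state carrying $\Phi=-2=\Phi_0$. What rescues ONE is that, $\alpha$ being non-sortable, the strategic pile never empties, so a cds move out of a two-pointer pile leaves exactly one pointer; thus ONE simply deletes $b$ and wins at once, and the one scenario that could cost ONE the game — inadvertently producing a cds-sortable permutation, hence the identity fixed point — cannot arise. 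Pinning this down, verifying that ONE's prescribed move is always available, and checking that no earlier state with $A_{\mathrm{rem}}=\emptyset$ but $B_{\mathrm{rem}}\neq\emptyset$ (which would have $\Phi\le-3$, violating the invariant) can be reached, is the delicate part of the write-up; the residue equivalences of the first paragraph and the arithmetic in $(2')$ are routine.
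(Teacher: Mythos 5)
Your proposal is correct and takes essentially the paper's approach: the paper proves this theorem by the same greedy strategy of removing the opponent's strategic-pile elements each round, with the counting details ``left to the reader'' as a refinement of its outline of Theorem \ref{cdsboundtheorem13}. Your reduction of the four residue cases to the single inequalities $4\vert\textsf{A}\vert\ge 3\vert\textsf{P}\vert-2$ and $4\vert\textsf{P}\setminus\textsf{A}\vert\ge 3\vert\textsf{P}\vert$ checks out, and the potential-function bookkeeping (including the $\Phi_0=-2$ boundary case, resolved by the non-sortability of $\alpha$ keeping the strategic pile nonempty) correctly supplies the details the paper omits.
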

In each case the strategy of removing as many strategic pile elements of the opponent per round as possible is a winning strategy of the winning player. The details of the arguments are similar to those outlined for Theorem \ref{cdsboundtheorem13}, and are left to the reader. For $j=3$ Theorem \ref{endgame} (2) can be equivalently stated as follows:
\begin{quote}
{\tt If $\vert\textsf{P}\vert \mod 4 = 3$ then for all $\textsf{A}\subseteq \textsf{P}$ such that $\vert \textsf{A}\vert \le \frac{1}{4}(\vert\textsf{P}\vert-3)$, TWO has a winning strategy in the game $\textsf{CDS}(\alpha,\textsf{A})$.}
\end{quote}
In the next theorem we prove that  this bound on $\vert\textsf{A}\vert$ is optimal:
\begin{theorem}\label{boundtighttheorem} For each integer $n>4$ that is a multiple of $4$ there exists a permutation $\alpha_n\in\textsf{S}_n$ with a strategic pile $\textsf{P}_n$, and a set $\textsf{A}_n\subseteq \textsf{P}_n$ such that
\begin{enumerate}
\item{$\vert \textsf{P}_n\vert \mod 4 = 3$,}
\item{$\vert \textsf{A}_n\vert =\frac{\vert\textsf{P}_n\vert - 3}{4} + 1$, and}
\item{ONE has a winning strategy in the game $\textsf{CDS}(\alpha_n,\textsf{A}_n)$.}
\end{enumerate}
\end{theorem}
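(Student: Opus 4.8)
The plan is to realize the graphs $\mathcal{G}_m$ of Section 6 with $m$ odd as move graphs of permutations and then to transport ONE's winning strategy across the bridge furnished by Theorem \ref{movegraphcds}. Fix $n=4\ell$ with $\ell\ge 2$ and set $m=\tfrac n2-1=2\ell-1$, an odd integer with $m\ge 3$.

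I would first produce an explicit permutation $\alpha_n\in\textsf{S}_n$ with two features: its move graph $\Gamma_n(\alpha_n)$ is isomorphic to $\mathcal{G}_m$, and its strategic pile $\textsf{P}_n$ is the full set of $n-1$ pointers of $\alpha_n$. The counts are consistent, since $\mathcal{G}_m$ has $2m+1=n-1$ vertices; I expect the most economical construction to be recursive, obtaining $\alpha_{n+4}$ from $\alpha_n$ by splicing four new symbols into the one-line notation so that two further triangles are grafted onto the chain, starting from a small hand-built $\alpha_8$ (already in $\textsf{S}_4$ the permutation $[2,4,1,3]$ has move graph a triangle and strategic pile all three pointers, which is the base pattern to iterate). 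One then checks against Sections 2--3 that the arcs of the pointers of $\alpha_n$ overlap exactly in the pattern of a chain of $m$ triangles, and that the cycle graph of $\alpha_n$ is a single alternating cycle in which the path of dotted edges running from the edge joining $n$ to $n+1$ to the edge joining $0$ to $1$ omits none of the internal dotted edges, so that $\textsf{P}_n$ really is all of the pointers. Fixing an isomorphism $\phi\colon\mathcal{G}_m\to\Gamma_n(\alpha_n)$ and putting $\textsf{A}_n:=\phi(\textsf{A}_m)$ for the set $\textsf{A}_m=\{2\}\cup\{k\le 2m+1:\ k\bmod 4=0\}$ of Section 6, conclusions (1) and (2) become arithmetic: $\vert\textsf{P}_n\vert=n-1=4(\ell-1)+3\equiv 3\pmod 4$, and $\textsf{A}_m$ consists of the vertex $2$ together with the $\ell-1$ multiples of $4$ below $4\ell-1$, whence $\vert\textsf{A}_n\vert=\ell=\tfrac{(n-1)-3}{4}+1=\tfrac{\vert\textsf{P}_n\vert-3}{4}+1$. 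Since $\textsf{P}_n\neq\emptyset$, $\alpha_n$ is not \textbf{cds}-sortable by the \textbf{cds} Sortability Theorem, so the game $\textsf{CDS}(\alpha_n,\textsf{A}_n)$ ends at a cyclic-shift fixed point whose code belongs to the strategic pile.

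The core of the argument is that $\textsf{CDS}(\alpha_n,\textsf{A}_n)$ and $\textsf{GCDS}(\mathcal{G}_m,\textsf{A}_m)$ are the same game under $\phi$. I would prove, by induction along an arbitrary play, the invariant that the current permutation has strategic pile equal to the set of non-isolated vertices of its move graph, that the induced subgraph on that set is isomorphic to $\mathcal{G}_k$ for the appropriate $k$, and that $\phi$ carries the labelling $(\mathcal{G}_k,\textsf{A}_k)$ onto it; initially $k=m$. A legal move in $\textsf{CDS}$ is $\textbf{cds}_{p,q}$ for an interlocking pair $\{p,q\}$, i.e.\ for an edge of the current move graph, necessarily an edge of its chain-of-triangles part; by Theorem \ref{movegraphcds} it turns the move graph into $\textbf{gcds}(\cdot,\{p,q\})$. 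By Theorem \ref{gcdsonclass} the chain part of the outcome is a copy of $\mathcal{G}_{k-1}$ alongside some newly isolated vertices, exactly two of which $\textbf{gcds}_2$ discards; the \textbf{cds} Bounded Removal Theorem says the strategic pile loses at most two elements; and the Strategic Pile Removal Theorem forces each element of a strategic pile of size at least $2$ to interlock some pointer, hence not to be isolated. When $k\ge 2$ these three facts pin the new strategic pile down to exactly the $2(k-1)+1$ non-isolated vertices, i.e.\ to the vertex set produced by $\textbf{gcds}_2$, so the invariant descends to $k-1$; the final passage $k=1\to k=0$ is checked directly (for instance $\textbf{cds}_{(1,2),(3,4)}([2,4,1,3])=[3,4,1,2]$, whose code $(2,3)$ is precisely the vertex that $\textbf{gcds}_2$ retains), and there the \textbf{cds} Fixed Point Theorem identifies the code of the resulting fixed point with the unique surviving pointer. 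Thus every play lasts exactly $m$ rounds and terminates at a \textbf{cds} fixed point whose code is $\phi(v^{\ast})$, where $v^{\ast}$ is the unique surviving vertex of the parallel $\textsf{GCDS}$ play; since $\textsf{A}_n=\phi(\textsf{A}_m)$, ONE wins one play precisely when ONE wins the other. As $m$ is odd, Corollary \ref{specialdecisiongcds} supplies a winning strategy for ONE in $\textsf{GCDS}(\mathcal{G}_m,\textsf{A}_m)$, and reading its moves through $\phi$ — legitimate because, by the invariant, every reached position is such an image and every graph-game move is realizable as a \textbf{cds} — yields a winning strategy for ONE in $\textsf{CDS}(\alpha_n,\textsf{A}_n)$, which is conclusion (3).

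The main obstacle I foresee is exactly this synchronization: Theorem \ref{movegraphcds} transforms the move graph by $\textbf{gcds}$ while the $\textsf{GCDS}$ game runs on $\textbf{gcds}_2$, so one must show that the vertices $\textbf{gcds}_2$ deletes are precisely the pointers that drop out of the strategic pile, and it is the combination of Bounded Removal with the non-isolation of strategic-pile elements — plus a separate look at the step where the pile shrinks to size $1$ — that forces this. A second, more mechanical hurdle is the construction of $\alpha_n$ itself and the certification of its move graph and its strategic pile; this is honest computation with the cycle and arc pictures rather than a conceptual difficulty.
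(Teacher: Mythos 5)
Your proposal is correct and follows essentially the same route as the paper: the paper exhibits the explicit permutation $\alpha_n=[5,7,6,\ldots,k,k-1,k+2,k+1,\ldots,n-3,n-4,n-1,n-2,3,2,4,n,1]$, verifies by tracing the alternating cycle that its strategic pile is all $n-1$ pointers and that its overlap graph is the chain of triangles $\mathcal{G}_{n/2-1}$ with the distinguished set matching $\textsf{A}_{n/2-1}$, and then invokes Corollary \ref{specialdecisiongcds} and Theorem \ref{movegraphcds} exactly as you do. The one place you go beyond the paper is the synchronization lemma (that the vertices discarded by $\textbf{gcds}_2$ are precisely the pointers leaving the strategic pile, via Bounded Removal plus non-isolation of pile elements); the paper passes over this step silently, so your argument supplies a detail it leaves implicit rather than a different proof.
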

\begin{proof}

Consider $\alpha_n = 
\lbrack{\color{black}5,7,6}, \cdots \underbrace{k,k-1,k+2,k+1}, \cdots ,\; \underbrace{n-3,\; n-4,\; n-1,\;n-2},\;{\color{black}3,2,4},\;n,\;{\color{black}1}\rbrack$ for $n>4$ a multiple of $4$. 
For odd $k>1$ consider the pointers in $\alpha_n$ associated with the repeating pattern $[k,k-1,k+2,k+1]$:
\begin{itemize}
\item The $(k-1,k)$ pointer interlocks with both the $(k,k+1)$ and the $(k-2,k-1)$ pointer. 
\item The $(k,k+1)$ pointer interlocks with $(k-2,k-1), (k-1,k), (k,+1,k+2)$ and $(k+2,k+3)$ pointers. 
\item The $(k+1,k+2)$ pointer interlocks with the $(k+2, k+3)$ and $(k, k+1)$ pointer. 
\end{itemize}
Thus, we get get the partial overlap graph in Figure \ref{fig:Return}.  

\begin{figure}[h]
\centering
\begin{tikzpicture}[scale = 1]

    \coordinate (v2) at (0,0);
    \coordinate (v4) at (2,0);
    \coordinate (v5) at (2,2);
    \coordinate (v6) at (4,0);
    \coordinate (v7) at (4,2);

    \node at (v2) [below]{(k-2,k-1)};
    \node at (v4) [below]{(k,k+1)};
    \node at (v5)  [above]{(k-1, k)};
    \node at (v6) [below]{(k+1,k+2)};
    \node at (v7)  [above]{(k+2,k+3)};

     \draw [black, thick] (v2) -- (v4);
    \draw  [black, thick]  (v2) -- (v5);
    \draw  [black, thick]  (v5) -- (v4) ;
     \draw [black, thick] (v4) -- (v6);
    \draw  [black, thick]  (v4) -- (v7);
    \draw  [black, thick]  (v6) -- (v7) ;
    
\end{tikzpicture}
\caption{The Buddy Block returns!}\label{fig:Return}
\end{figure}
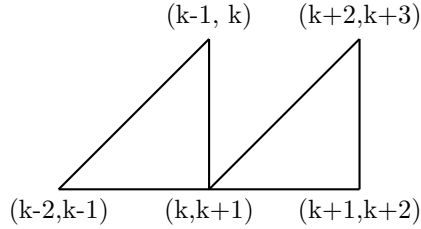

The full overlap graph $\mathcal{O}_n$ of $\alpha_n$ is isomorphic to the graph $\mathcal{G}_{n-1}$ and is depicted in Figure \ref{fig:General}. Let $f:\mathcal{G}_{n-1}\rightarrow\mathcal{O}_n$ be the isomorphism. 

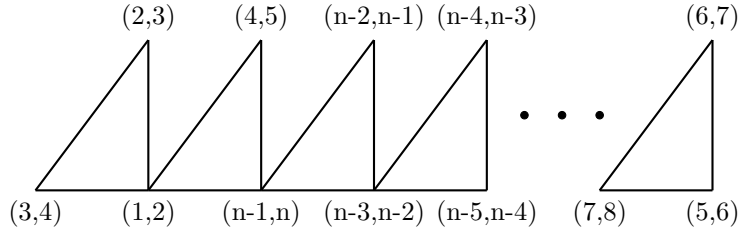
\begin{figure}[ht]
\centering
\begin{tikzpicture}[scale = 1]

    \coordinate (v1) at (0,0);
    \coordinate (v2) at (1.5,0);
    \coordinate (v3) at (1.5,2);
    \coordinate (v4) at (3,0);
    \coordinate (v5) at (3,2);
    \coordinate (v6) at (4.5,0);
    \coordinate (v7) at (4.5,2);
    \coordinate (v8) at (9,0);
    \coordinate (v9) at (9,2);
    \coordinate (v10) at (7.5,0);
    \coordinate (v11) at (6,0);
    \coordinate (v12) at (6,2);
    \coordinate (v13) at (6.5,1);
    \coordinate (v14) at (7,1);
    \coordinate (v15) at (7.5,1);

    \node at (v1) [below]{(3,4)};
    \node at (v2) [below]{(1,2)};
    \node at (v3) [above]{(2,3)};
    \node at (v4) [below]{(n-1,n)};
    \node at (v5)  [above]{(4,5)};
    \node at (v6) [below]{(n-3,n-2)};
    \node at (v7)  [above]{(n-2,n-1)};
    \node at (v8) [below]{(5,6)};
    \node at (v9)  [above]{(6,7)};
    \node at (v10) [below]{(7,8)};
    \node at (v11) [below]{(n-5,n-4)};
    \node at (v12)  [above]{(n-4,n-3)};
    \node[draw,circle, inner sep=1pt, fill] at (v13){};
    \node[draw,circle, inner sep=1pt, fill] at (v14){};
    \node[draw,circle, inner sep=1pt, fill] at (v15){};

    \draw [black, thick] (v1) -- (v2);
    \draw  [black, thick]  (v1) -- (v3);
    \draw  [black, thick]  (v3) -- (v2);
     \draw [black, thick] (v2) -- (v4);
    \draw  [black, thick]  (v2) -- (v5);
    \draw  [black, thick]  (v5) -- (v4) ;
     \draw [black, thick] (v4) -- (v6);
    \draw  [black, thick]  (v4) -- (v7);
    \draw  [black, thick]  (v6) -- (v7) ;
    \draw  [black, thick]  (v8) -- (v9);
    \draw  [black, thick]  (v10) -- (v9);
    \draw  [black, thick]  (v8) -- (v10);
    \draw  [black, thick]  (v11) -- (v12);
    \draw  [black, thick]  (v11) -- (v6);
    \draw  [black, thick]  (v12) -- (v6);
\end{tikzpicture}
\caption{General form of the move graph of $\alpha_n$.}\label{fig:General}
\end{figure}

Next observe that these permutations have a full strategic pile, so that every vertex in the move graph represents an element in the strategic pile. From the definition of $\alpha_n$ we see that the segment 
\[
   {\color{black}\DashedArrow[ dotted, thick    ]} 5 {\color{black!80}\rightarrow} 0 {\color{black}\DashedArrow[ dotted, thick    ]} 1 {\color{black!80}\rightarrow} n {\color{black}\DashedArrow[ dotted, thick    ]} n+1 {\color{black!80}\rightarrow} 1 {\color{black}\DashedArrow[ dotted, thick    ]} 2 {\color{black!}\rightarrow} 3 {\color{black}\DashedArrow[ dotted, thick    ]} 4 {\color{black!80}\rightarrow} 2{\color{black}\DashedArrow[ dotted, thick    ]} 3\cdots
\]
is part of an alternating path in the cycle graph of $\alpha_n$. Thus the strategic pile is nonempty. We must show that for each $n>4$ that is a multiple of $4$, for each $k$ with $1\le k<n$ the edge $k{\color{black}\DashedArrow[ dotted, thick    ]} k+1$ is in this alternating cycle. For $n>8$, continuing the alternating path from the entry $3$ in $\alpha_n$ shows that each of the edges $(n-2i){\color{black}\DashedArrow[ dotted, thick    ]} (n-2i+1)$ for $1\le i \le (n/2-4)$ occurs in this alternating cycle. Since $\alpha_n$ is of form $\lbrack 5,\; 7,\; 6,\; 9,\; 8,\; 11,\; 10,\; \cdots\rbrack$ we see that this alternating cycle also contains 
\[
   \cdots {\color{black!80}\rightarrow} 6 {\color{black}\DashedArrow[ dotted, thick    ]} 7 {\color{black!80}\rightarrow} 5 {\color{black}\DashedArrow[ dotted, thick    ]} 6 {\color{black!80}\rightarrow} 7 \cdots.
\]
It follows, by continuing the alternating cycle following permutation $\alpha_n$'s entries, that each edge of form $n- 2i -1{\color{black}\DashedArrow[ dotted, thick    ]} n-2i$ for $1\le i < (n/2-4)$ is also in this alternating cycle. Examining the final segment of $\alpha_n$ we see that this cycle continues as
\[
   \cdots (n-2){\color{black!80}\rightarrow} (n-1) {\color{black}\DashedArrow[ dotted, thick    ]} n {\color{black!80}\rightarrow} 4 {\color{black}\DashedArrow[ dotted, thick    ]} 5 {\color{black!80}\rightarrow} 0 \cdots
\]
showing that the remaining edges $i{\color{black}\DashedArrow[ dotted, thick    ]}i+1$ are also in this alternating cycle. It follows that $\textsf{P}_n = \{(i,i+1):\; 1\le i<n\}$ is the strategic pile of $\alpha_n$. Thus $\textsf{P}_n$ is the vertex set of overlap the graph $\mathcal{O}_n$ of $\alpha_n$. With $f:\mathcal{G}_n\rightarrow\mathcal{O}_n$ the graph isomorphism, choose the subset $\textsf{B}_n = \{f(a):a\in\textsf{A}_n\}$ of $\textsf{P}_n$. Then $(\mathcal{O}_n,\textsf{B}_n)$ is isomorphic to the position $(\mathcal{G}_{n-1},\textsf{A}_{n-1})$.

As $n-1$ is odd, Corollary \ref{specialdecisiongcds} implies that ONE has a winning strategy in the game $\textsf{GCDS}(\mathcal{G}_{n-1}, \textsf{A}_{n-1})$, and thus in the game $\textsf{GCDS}(\mathcal{O}_n,\textsf{B}_n)$. By its definition $\vert \textsf{A}_{n-1}\vert =  \frac{\vert\textsf{P}_n\vert-3}{4} +1$. By Theorem \ref{movegraphcds} ONE has a winning strategy in the game $\textsf{CDS}(\alpha_n,\textsf{B}_n)$. 
\end{proof}
Note that by the end-game analysis given before Theorem \ref{boundtighttheorem}, removing one point from $\textsf{B}_n$ creates an example where TWO has a winning strategy in the corresponding game. This illustrates that the bound in item (2) of the end-game analysis for player TWO is optimal.

\section{Remarks and Questions}

Can Theorem \ref{boundtighttheorem} be further improved? Theorem \ref{boundtighttheorem} gives examples in $\textsf{S}_n$ when $n$ is a multiple of $4$. This constraint on $n$ seems to be an artifact of the proof technique, but at this time there is no indication that for other classes of $n$ the corresponding tight bound might be achievable. 
One approach to this question may be to examine the corresponding problem for graphs that are the move graphs of permutations. This raises another interesting decision problem:
\vspace{0.1in}

\begin{center}
\parbox{0.8\textwidth}{
{\flushleft{\bf CDS MOVE GRAPH}}\\
{\flushleft{INSTANCE:}} A finite graph $\mathcal{F}$.
{\flushleft{QUESTION:}} Is there a permutation $\alpha$ with \textbf{cds} move graph isomorphic to $\mathcal{F}$?\\
  }
\end{center}
\vspace{0.1in}
It is of independent interest to determine the complexity of the CDS MOVE GRAPH decision problem.

The fundamental facts exploited in Theorem \ref{boundtighttheorem} are (1) that there are connected finite graphs $\mathcal{G} = (V,\; E)$ and a set $\textsf{A}\subseteq V$ such that $\vert V\vert \mod 4 = 3$ and  $\vert\textsf{A}\vert = \frac{\vert V\vert-3}{4} + 1$ and yet ONE has a winning strategy in the game $\textsf{GCDS}(\mathcal{G},\textsf{A})$ and (2) these graphs are isomorphic to the \textbf{cds} move graphs of permutations with a full strategic pile. It is not clear that for connected graphs without an additional constraint such as (2), there should be a threshold proportion such that for $\frac{\vert \textsf{A}\vert}{\vert V\vert}$ above this proportion, ONE has a winning strategy while for $\frac{\vert \textsf{A}\vert}{\vert V\vert}$ below this proportion TWO has a winning strategy\footnote{Thus it is not clear if a generalization of Theorem \ref{cdsboundtheorem13} holds for finite connected graphs.}.

\begin{problem}\label{graphoptimality}
Is there for each $\epsilon>0$ a connected finite graph $\mathcal{G} = (V,\; E)$, and set $\textsf{A}\subseteq V$ is such that $\frac{\vert \textsf{A}\vert}{\vert V\vert}\le \epsilon$, and yet ONE has a winning strategy in the game $\textsf{GCDS}(\mathcal{G},\textsf{A})$?
\end{problem}

\section{Acknowledgements}

The research represented in this paper was funded by an NSF REU grant DMS 1359425, by Boise State University and by the Department of Mathematics at Boise State University. We thank the authors of \cite{AHMMSW} for pre-publication access to their results.

\vspace{0.05in}

{\flushleft{* Corresponding Author: }} mscheepe@boisestate.edu
\vspace{-0.1in}

\end{document}